\newcommand{\ints}{\mathbb Z}
\newcommand{\rationals}{\mathbb Q}
\newcommand{\str}{\mathcal{O}}
\newcommand{\proj}{\mathbb{P}}
\newcommand{\complex}{\mathbb C}
\newcommand{\s}{\varepsilon}
\theoremstyle{plain}
\numberwithin{equation}{section}
\newtheorem{theorem}{Theorem}[section]
\newtheorem*{theorem*}{Theorem}
\newtheorem{proposition}[theorem]{Proposition}
\newtheorem{definition}[theorem]{Definition}
\theoremstyle{definition}
\newtheorem{remark}[theorem]{Remark}
\newtheorem{discussion}[theorem]{Discussion}
\begin{document}
\title{Seshadri constants on hyperelliptic surfaces}
\author[Krishna Hanumanthu]{Krishna Hanumanthu}
\address{Chennai Mathematical Institute, H1 SIPCOT IT Park, Siruseri, Kelambakkam 603103, India}
\email{krishna@cmi.ac.in}

\author[Praveen Kumar Roy]{Praveen Kumar Roy}
\address{Chennai Mathematical Institute, H1 SIPCOT IT Park, Siruseri, Kelambakkam 603103, India}
\email{praveenkroy@cmi.ac.in}

\subjclass[2010]{14C20}
\thanks{Authors were partially supported by a grant from Infosys Foundation}

\date{January 30, 2018}
\maketitle
\begin{abstract}
We prove new results on single point Seshadri constants for ample line
bundles on hyperelliptic surfaces, motivated by the results in \cite{Far}. 
Given a hyperelliptic surface $X$ and an ample line bundle $L$ on $X$, 
we show that the least Seshadri constant $\s(L)$ of $L$ is a rational number
when $X$ is not of type 6. We also prove new lower bounds for the Seshadri
constant $\s(L,1)$ of $L$ at a very general point. 
\end{abstract}

\section{Introduction}\label{intro}

Let $X$ be a smooth complex projective variety and let $L$ be a line bundle on
$X$. The Seshadri criterion \cite[Theorem 7.1]{Har70} for ampleness says that $L$ is ample
if and only if there exists a real number $\varepsilon > 0$ such that
$L\cdot C \ge \varepsilon \cdot {\rm mult}_x C$, where $x \in X$ is an
arbitrary point and $C \subset X$ is any irreducible and
reduced curve containing $x$ (here ${\rm mult}_x C$
denotes the multiplicity of
the curve $C$ at $x$).  In other words, $L$ is ample if and only if
the infimum of the ratios $\frac{L\cdot C}{{\rm mult}_{x}C}$, over all
points $x$ and all irreducible and reduced curves $C$ passing through
$x$, is positive. 
Using this idea Demailly \cite{Dem} defined the notion of {\it
  Seshadri constants}. Given $X,L$ as above, 
the {\it Seshadri constant} of $L$ at $x \in X$ is defined as: 
$$\varepsilon(X,L,x):=  \inf\limits_{\substack{x \in C}} \frac{L\cdot
  C}{{\rm mult}_{x}C},$$ where the infimum is taken over all
irreducible and reduced curves passing through $x$. The Seshadri
criterion for ampleness can now be stated simply as follows: $L$ is ample if
and only if $\varepsilon(X,L,x) > 0$ for all $x \in X$. 

There are several interesting directions in which Seshadri
constants are being studied. See \cite{primer} for a comprehensive 
survey. 
One of the important problems in the study of Seshadri constants 
is computing them or bounding them. In the present article we focus on
this problem for hyperelliptic surfaces.
In general, Seshadri constants are difficult to compute precisely and
a lot of research has focussed on finding good lower and upper bounds.

Let $X$ be a smooth complex projective surface and let $L$ be an ample
line bundle on $X$. It is easy to see that $\varepsilon(X,L,x) \le
\sqrt{L^2}$ for all $x$. One then defines 
$$\varepsilon(X,L,1) : = \sup\limits_{x\in X}
\varepsilon(X,L,x).$$ 

It is known that $\varepsilon(X,L,1)$ is attained at a {\it very 
general} point $x \in X$; see
\cite{Ogu}. This means that $\varepsilon(X,L,1) = 
\varepsilon(X,L,x)$ for all $x$ 
outside a countable union of proper Zariski closed sets in 
$X$.

It is also known that if $\s(X,L,x) < \sqrt{L^2}$, then $\s(X,L,x) =  \frac{L\cdot
  C}{{\rm mult}_{x}C}$ for some curve $C$ (\cite[Proposition
1.1]{BS1}). If $\s(X,L,x) < \sqrt{L^2}$, then we say that it is {\it
  sub-maximal}. 
So sub-maximal Seshadri
constants are always rational, while a maximal Seshadri constant is
irrational if $L^2$ is not a square. However, no example is known of a triple
$(X,L,x)$ for which $\s(X,L,x) \notin \rationals$. 

At the other end of the interval, one defines 
$$\s(X,L) := \inf\limits_{\substack{x \in X}}  \s(X,L,x).$$ 

It is easy
to see that $\s(X,L) > 0$ for ample $L$. In fact, $\s(X,L) \ge
\frac{1}{n}$, if $nL$ is very ample. Just like $\s(X,L,1)$, it is known that $\s(X,L) =
\s(X,L,x)$ for some $x \in X$ (see \cite{Ogu}). But unlike $\s(X,L,1)$, which
is attained at very general points, $\s(X,L)$ is attained at {\it special}
points. In general, one has the following inequalities for any point
$x \in X$: 
$$0 < \s(X,L) \le \s(X,L,x) \le \s(X,L,1) \le \sqrt{L^2}.$$ 
Further, it follows from the previous paragraph that $\s(X,L) \in
\rationals$, except when $L^2$ is not a square and 
$\s(X,L,x) = \s(X,L) = \s(X,L,1) = \sqrt{L^2}$ for all $x \in X$.

The above discussion leads to an interesting dynamic in the study of
bounds on Seshadri constants. In many situations, the Seshadri
constants of $L$  at very general points may be expected
to be maximal, i.e., equal to $\sqrt{L^2}$.
%and are irrational if $L^2$ is not a square. 
On the other
hand, the Seshadri constants at special points (and $\s(X,L)$) are expected to be
sub-maximal and hence rational.  This leads to two contrasting
problems. On the one hand, the focus has been to find good lower
bounds for $\s(X,L,1)$ which are very close to the maximal value
$\sqrt{L^2}$. The second problem is to calculate the Seshadri
constants at special points and try to prove that $\s(X,L) \in
\rationals$, for instance. These are very different problems because the
first one uses information about curves passing through very general
points on the surface, while the second problem requires knowledge of
specific curves passing through special points on the surface. See
also Discussion \ref{type6-special}. 

For a sampling of the many results in this area, see 
\cite{EL,Bau,BS,KSS,BS1,Far}. For a detailed account, see \cite{primer}.

In this article, we address both the problems discussed above in the case of
hyperelliptic surfaces. Our primary motivation is \cite{Far}, where
several results on Seshadri constants on hyperelliptic surfaces are proved.  

Hyperelliptic surfaces are minimal surfaces of Kodaira dimension 0 and
irregularity 1. They are realized as finite group quotients of 
products of two elliptic curves. These surfaces have been classified
and are known to belong to one of seven different types. They all 
have Picard rank 2 and the free group 
Num$(X)$ of divisors modulo numerical equivalence is well-understood. 
See Section \ref{prelims} for more details. 

Let $X$ be a hyperelliptic surface and $L$ be an ample line bundle on
$X$. We first consider the problem of computing $\s(X,L)$ in 
subsection \ref{special}.  In our main result Theorem \ref{rational} in
this subsection, we show that $\s(X,L) \in \rationals$
provided $X$ is not of type 6. This partially 
answers \cite[Question 1.6]{SS}, which asks if $\s(X,L)$ is always
rational for any pair $(X,L)$. Some affirmative answers to this
question are known (\cite{Bau,BS,S,Syz,Fue}), but it is open in general. 
In other results in this subsection, we also explicitly compute
$\s(X,L)$ in some cases. 

In subsection \ref{general}, we study the Seshadri constant of $L$ at
a very general point $x$. One of our main results, 
Theorem \ref{general-seshadri-constant}, says that $\s(X,L,1) \ge
(0.93)\sqrt{L^2}$, or $\s(X,L,1)$ is equal to one of two easily computable
natural numbers. 
Let $L$ be of numerical type $(a,b)$. Then
depending on how $a$ and $b$ are related to each other, we either explicitly
compute $\s(X,L,1)$ or show that $\s(X,L,1) \ge (0.93) \sqrt{L^2}$. We
have such a result for each of the seven types of hyperelliptic
surfaces. There are several results in the literature giving lower bounds for
$\s(X,L,1)$ when $X$ is an arbitrary surface and $L$ is any ample
line bundle.  In Remark \ref{compare}, we compare our bound 
$(0.93) \sqrt{L^2}$ with some existing bounds and
note that it is often better.

We work over $\complex$, the field
of complex numbers.  A {\it surface} is a
two-dimensional smooth complex projective variety. When the surface
$X$ is clear from the context, we denote Seshadri constants simply by
$\s(L,x), \s(L)$, or $\s(L,1)$. 

\section{Preliminaries}\label{prelims}

%Basic facts about hyperelliptic surfaces. 

\begin{definition} 
A hyperelliptic surface $X$ is a minimal smooth 
surface with Kodaira dimension $\kappa(X)=0$ satisfying  
$h^1(X,\str_X) = 1$ and $h^2(X,\str_X) = 0$.

%geometric genus $p_g = h^0(X,\str_X)=0$ and
%irregularity $q(X) = h^1(X,\str_X)=1$. 
\end{definition}

Hyperelliptic surfaces are also known as {\it bielliptic surfaces}
(cf. \cite{Bea,Se}). 
We recall below some key properties of hyperelliptic surfaces that we use
repeatedly. More details can be found in \cite{Bea,Se}. We follow the
notation in \cite{Se,Far}.

There is an alternate characterization of hyperelliptic
surfaces. A smooth surface $X$ is hyperelliptic if and only if $X \cong (A\times B)/G$, where $A$ and $B$ are
elliptic curves and $G$ is a finite group 
of translation of $A$ acting on $B$ in such a way that $B/G \cong
\proj^1$.% and $A/G$ is an elliptic curve.

We have the following diagram:
$$
\xymatrix{
X \cong (A\times B)/G \ar[d]_{\Psi}\ar[r]^-{\Phi}  & A/G \\
B/G \cong \proj^1
}
$$

In the above diagram $\Phi$ and $\Psi$ are natural projections.
The fibres of $\Phi$ are all smooth and isomorphic to $B$. 
The fibres of $\Psi$ are all multiples of smooth elliptic curves, and
all but finitely many of them are smooth and isomorphic to $A$. 
%Further the singular fibres of $\Psi$ are all multiples of smooth elliptic curves.

Hyperelliptic surfaces were classified more than hundred years ago 
by G. Bagnera and M. de Franchis by analyzing the group $G$ and its action on $B$. 
They showed that every hyperelliptic surface is of one of the seven types
listed in the table below;
see \cite[V1.20]{Bea}.

Every hyperelliptic surface has Picard rank 2. Serrano \cite{Se} has
described a basis for the free group Num($X$) of divisors modulo
numerical equivalence for each of the seven types of hyperelliptic
surfaces. For each type, Serrano also lists the multiplicities
$m_1,\ldots,m_s $ of the
singular fibres of $\Psi$, where $s$ is the number of singular fibres.

\begin{theorem} \label{serrano} \cite[Theorem 1.4]{Se}. 
Let $X \cong (A\times B)/G $ be a hyperelliptic surface. 
A basis for the group 
Num($X$) of divisors modulo
numerical equivalence and the multiplicities of the singular  fibres of $\Psi: X
\to B/G$ in each type are given in the following table.
\begin{center} 

  \begin{tabular}{|c| c| c| c|}
 \hline
 
 Type of $X$  & $G$ &$ m_1,m_2,
\ldots,m_s$ & Basis of Num($X$)\\
 \hline 
 1 & $\mathbb{Z}_{2}$ &$2,2,2,2$&$A/2$, $B$ \\ 
 2 & $\mathbb{Z}_{2} \times \mathbb{Z}_{2}$&$2,2,2,2$&$A/2$, $B/2$ \\
 3 & $\mathbb{Z}_{4}$ &$2,4,4$&$A/4$, $B$ \\
 4 & $\mathbb{Z}_{4} \times \mathbb{Z}_{2}$ & $2,4,4$ &$A/4$,$B/2$  \\
 5 & $\mathbb{Z}_{3}$ & $3,3,3$ & $A/3, B$ \\
 6 & $\mathbb{Z}_{3} \times \mathbb{Z}_{3}$ & $3,3,3$ & $A/3,B/3$ \\
 7 & $\mathbb{Z}_{6}$ & $2,3,6$ & $A/6,B$ \\
\hline 
  
 \end{tabular}

\end{center}
\end{theorem}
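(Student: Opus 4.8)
The plan is to derive the whole table from the presentation $X\cong(A\times B)/G$ (which holds for every hyperelliptic surface) together with the classical analysis of which groups $G$ can occur. The first, and essentially the only substantial, step is this analysis. Since $G$ acts on $A$ by translations it acts \emph{freely} on $A$, hence freely on $A\times B$, so $p\colon A\times B\to X$ is an \'etale Galois cover of degree $n:=|G|$ and $A/G$ is again an elliptic curve. After replacing $A$ by a suitable isogenous curve one may assume $G$ acts faithfully on $B$; then $B/G\cong\proj^1$ forces the ``rotation part'' $G/N$ --- where $N\trianglelefteq G$ is the subgroup acting on $B$ by translations --- to be one of $\ints_2,\ints_3,\ints_4,\ints_6$, since these are the only nontrivial finite subgroups of $\mathrm{Aut}(B,0)$ realising a rational quotient. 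A finite case analysis of the extension $1\to N\to G\to G/N\to1$, using that both $N$ and $G$ are finite subgroups of elliptic curves (hence cyclic or a product of two cyclic groups), then produces exactly the seven pairs $(G,\text{action})$ in the first two columns. I expect this enumeration --- proving the list both complete and non-redundant --- to be the main obstacle; the remaining steps are finite checks.

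Given the group data, I would next compute the multiplicities $m_1,\dots,m_s$. Writing $q\colon B\to B/G\cong\proj^1$ and letting $G_b\le G$ denote the stabilizer of $b\in B$ for the $B$-action, the fibre of $\Psi$ over $q(b)$ equals $p(A\times\{b\})$, which --- because $G$ acts freely on $A$ --- is the smooth elliptic curve $A/G_b$ taken with multiplicity $|G_b|$. So the multiple fibres of $\Psi$ are precisely those over the branch points of $q$, with $m_i=|G_b|$ the ramification index there. Riemann--Hurwitz for the degree-$n$ cover $q$ of $\proj^1$ by the genus-one curve $B$ reads
\[
0=-2n+\sum_{\bar b\in\proj^1}\frac{n}{|G_b|}\bigl(|G_b|-1\bigr),\qquad\text{equivalently}\qquad \sum_{\bar b}\Bigl(1-\tfrac{1}{|G_b|}\Bigr)=2 ,
\]
and running through the stabilizer configurations compatible with each $G$ from Step~1 gives $(2,2,2,2)$ when the rotation part is $\ints_2$, $(3,3,3)$ when it is $\ints_3$, $(2,4,4)$ when it is $\ints_4$, and $(2,3,6)$ in type $7$.

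For $\mathrm{Num}(X)$, I would first record its lattice-theoretic shape. It has rank $2$; since $h^2(X,\str_X)=0$ every integral cohomology class is of type $(1,1)$, so $\mathrm{Num}(X)$ is a unimodular sublattice of $H^2(X,\ints)/\mathrm{torsion}$, and because $K_X$ is numerically trivial it is even, hence isometric to the hyperbolic plane $\left(\begin{smallmatrix}0&1\\1&0\end{smallmatrix}\right)$. The two fibrations supply the isotropic directions: the general fibre classes $\mathbf A$ of $\Psi$ and $\mathbf B$ of $\Phi$ satisfy $\mathbf A^2=\mathbf B^2=0$, and pulling $\mathbf A,\mathbf B$ back along $p$ and applying the projection formula gives $\mathbf A\cdot\mathbf B=n$.

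It then remains to pin down the primitive integral class on each ruling. On the $\Psi$-side the reduced curve underlying the fibre of maximal multiplicity $n_A:=\max_i m_i$ has class $\tfrac1{n_A}\mathbf A$, and one checks this is primitive; symmetrically --- by tracking $G$-invariant curves through $p$, or equivalently by intersecting the known relations against the hyperbolic form --- the primitive class on the $\Phi$-side is $\tfrac1{n_B}\mathbf B$ with $n_B:=n/n_A=|N|$. Since $\bigl(\tfrac1{n_A}\mathbf A\bigr)\cdot\bigl(\tfrac1{n_B}\mathbf B\bigr)=1$, these two classes span a unimodular rank-two sublattice, which must therefore be all of $\mathrm{Num}(X)$; reading off $(n_A,n_B)$ type by type --- $(2,1),(2,2),(4,1),(4,2),(3,1),(3,3),(6,1)$ --- reproduces Serrano's bases $A/2,B$;\ $A/2,B/2$;\ $A/4,B$;\ $A/4,B/2$;\ $A/3,B$;\ $A/3,B/3$;\ $A/6,B$. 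Apart from the group classification in Step~1, the only delicate points are verifying $n_A=\max_i m_i$ and the primitivity of the two fractional fibre classes, and both are routine once the explicit action of $G$ in each of the seven cases is written down.
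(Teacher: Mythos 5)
This statement is not proved in the paper at all: it is quoted verbatim as \cite[Theorem 1.4]{Se} (with the underlying classification going back to Bagnera--de Franchis, cf.\ \cite[VI.20]{Bea}), so there is no internal proof to compare your argument against. Judged on its own terms, your outline follows the standard route -- Bagnera--de Franchis enumeration of $(G,\text{action})$, Riemann--Hurwitz on $B\to B/G$ for the multiplicities, and the identification of $\mathrm{Num}(X)$ with the hyperbolic plane $U$ -- and the parts you actually carry out (freeness of the $A$-action, the orbifold signatures $\sum(1-1/|G_b|)=2$, unimodularity and evenness of $\mathrm{Num}(X)$, $\mathbf A\cdot\mathbf B=|G|$) are correct.

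The genuine gap is exactly the step you defer as ``routine'': pinning down the divisibilities $n_A,n_B$ of the two fibre classes. Your lattice argument only yields $\mu\mid n_A$ (from the reduced multiple fibre $\mathbf A/\mu$ being effective) and $n_An_B=|G|$; it does not give the upper bound $n_A\le\mu$. In the odd types this is enough, since $\mu=|G|$ forces $n_A=\mu$, $n_B=1$. But in types $2$, $4$ and $6$ both options survive the numerical constraints: e.g.\ for type $2$ the data $\mathbf A^2=\mathbf B^2=0$, $\mathbf A\cdot\mathbf B=4$, $\mathbf A/2$ integral are equally consistent with $(n_A,n_B)=(2,2)$ and with $(4,1)$, and distinguishing them requires exhibiting an actual divisor class $D$ with $D\cdot(\mathbf A/2)=1$ (equivalently, showing $\mathbf B$ is divisible by $|N|$, or that $\mathbf A/4$ is not integral). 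Your proposed justification -- ``tracking $G$-invariant curves through $p$'' -- is precisely where Serrano's proof does its real work, via explicit invariant divisors and the Appell--Humbert-type description of line bundles on these quotients (cf.\ \cite{Apr}); it is not a formal consequence of the setup. The same remark applies, less critically, to the completeness and non-redundancy of the seven-type enumeration, which you also only gesture at. So the strategy is sound, but the two steps you postpone constitute essentially all of the content of the theorem.
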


Let $X$ be a hyperelliptic surface. 
Let $\mu = lcm(m_1,m_2,\ldots,m_s)$ and let $\gamma = |G|$. By Serrano's theorem, a basis of Num($X$)
is given by $A/\mu$, $(\mu/\gamma)B$. 

\noindent {\bf Notation: }We say that {\it $L$ is a line bundle of type $(a,b)$} on
$X$ if $L$ is numerically equivalent to  $a\cdot A/\mu +
b\cdot (\mu/\gamma)B$. If $L$ is of type $(a,b)$, we write $L \equiv
(a,b)$. 

We note the following properties of line bundles on $X$. 

\begin{enumerate}
\item $A^2 = 0 , B^2 = 0 , A\cdot B = \gamma$.
\item A divisor $b\cdot (\mu/\gamma)B \equiv  (0,b)$ is effective if and only if 
$b(\mu/\gamma) \in \mathbb{N}$
(\cite[Proposition 5.2]{Apr}). 
\item A line bundle of type $(a,b)$ is ample if and only if $a > 0$
    and $b > 0$ (\cite[Lemma  1.3]{Se}).
\item If $C$ is an irreducible and reduced curve on $X$ and $x \in C$
  is a point of multiplicity $m$, then $C^2 \ge m^2-m$. 
\end{enumerate}

The inequality in (4) follows from the genus formula, and the facts
that the canonical divisor is numerically trivial on a hyperelliptic
surface and that there are no rational curves on a
hyperelliptic surface. 

We also use the following important lower bound on self-intersection of a curve
$C$ passing through a very general point. See \cite{EL,X1,KSS,Bas}, for instance.

\begin{theorem}\label{xu}
Let $X$ be a hyperelliptic surface and let $C$ be an irreducible and
reduced curve on $X$. Suppose that $C$ passes through a very general
point $x \in X$ with multiplicity $m \ge 2$. Then $C^2 \ge m^2-m+2$. 
\end{theorem}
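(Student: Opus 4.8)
The plan is to adapt the standard degeneration argument of Ein--Lazarsfeld and Xu (see \cite{EL,X1}, and also \cite{KSS,Bas}) to the hyperelliptic setting, where the facts that $K_X \equiv 0$ and that $X$ has no rational curves make the extremal case completely transparent. Since $K_X$ is numerically trivial, adjunction gives $C^2 = 2p_a(C) - 2$, so $C^2$ is even; by property~(4) recalled in Section~\ref{prelims} we already know $C^2 \ge m^2 - m$, and $m^2 - m = m(m-1)$ is even, so it is enough to exclude the possibility $C^2 = m^2 - m$. If that held, then $p_a(C) = \binom{m}{2} + 1$, whereas $p_a(C) = g(\bar C) + \sum_{q\in C}\delta_q(C) \ge 1 + \binom{m}{2}$, where $\bar C$ is the normalization (of genus $\ge 1$, since $X$ has no rational curves) and $\delta_x(C) \ge \binom{m}{2}$ because $C$ has multiplicity $m$ at $x$; equality would then force $C$ to have an ordinary $m$-fold point at $x$ and to be smooth elsewhere, with elliptic normalization. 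We will see this is incompatible with $x$ being very general.

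The second step promotes the single curve $C$ to a family. Irreducible reduced curves on $X$ lie in countably many algebraic families, and $C$ belongs to one of them, say the family parametrized by a variety $\mathcal F$; set $W = \{(D,p) \in \mathcal F \times X : {\rm mult}_p D \ge m\}$, a constructible incidence locus. Since $x$ is very general and $(C,x) \in W$, some irreducible component $W' \subseteq W$ must dominate $X$ under the second projection; in particular $\dim W' \ge 2$, the curve $C$ lies in the family parametrized by $W'$, and all members of this family share the numerical class of $C$. If the projection $W' \to X$ has positive-dimensional fibres, then over a general point $q \in X$ there is a positive-dimensional family of irreducible curves, each of multiplicity $\ge m$ at $q$; blowing up $q$ and taking proper transforms gives a positive-dimensional family of distinct irreducible curves in a fixed numerical class, so two general members $\widetilde D_1 \ne \widetilde D_2$ satisfy $0 \le \widetilde D_1\cdot \widetilde D_2 = D^2 - (m')^2$ with $m' \ge m$, whence $C^2 = D^2 \ge m^2$, which is $\ge m^2 - m + 2$ for $m \ge 2$. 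If instead $W' \to X$ is generically finite, then restricting $W'$ over a general one-dimensional subfamily produces a one-parameter family $\{C_t\}_{t\in T}$ of irreducible reduced curves together with a \emph{non-constant} section $t \mapsto x_t$ such that ${\rm mult}_{x_t} C_t \ge m$; the section is non-constant precisely because the image of $W'$ in $X$ is two-dimensional.

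It remains to handle this last case, i.e.\ to show that a one-parameter family of irreducible curves carrying a non-constant section of $m$-fold points has general member of self-intersection at least $m^2 - m + 2$; this is Xu's inequality \cite{X1}, and it is the technical heart of the theorem. The argument replaces the total space of the family by a smooth surface $S$ equipped with a fibration $\phi\colon S \to T$ whose general fibre is the normalization $\bar C$ of the corresponding $C_t$ (of genus $\ge 1$) and a generically finite morphism $F\colon S \to X$; the moving $m$-fold point yields a multisection of $\phi$ along which $F$ is ramified enough that, expanding $(\deg F)\,C_{t_0}^2 = (F^\ast C_{t_0})^2$ via the decomposition of $F^{-1}(C_{t_0})$ into the fibre $\bar C_{t_0}$ and the horizontal divisor swept out by the points of $C_t\cap C_{t_0}$ for $t\ne t_0$, one obtains $C_{t_0}^2 > m^2 - m$, hence $C_{t_0}^2 \ge m^2 - m + 2$ by parity. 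Since the numerical class is constant in the family, our original $C$ also satisfies $C^2 \ge m^2 - m + 2$, completing the proof. The only delicate point is the intersection-theoretic bookkeeping on $S$ in this final step, carried out in \cite{X1} (see also \cite{EL}); everything else is the reduction above together with the two standard facts about hyperelliptic surfaces, namely $K_X \equiv 0$ and the absence of rational curves.
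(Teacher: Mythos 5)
The paper does not actually prove Theorem \ref{xu}: it quotes it from the literature, and the shortest route from the given citations is the moving-curves bound of \cite{KSS} (see also \cite{Bas}), namely $C^2 \ge m(m-1) + \mathrm{gon}(\widetilde{C})$ for an irreducible curve with an $m$-fold point at a very general point, combined with the absence of rational curves on a hyperelliptic surface, which forces the gonality of the normalization to be at least $2$. Your argument is correct and rests on the same Ein--Lazarsfeld/Xu degeneration philosophy, but it reaches the ``$+2$'' by a genuinely different combination: you carry out the reduction to a non-trivial one-parameter family $(C_t,x_t)$ explicitly (countably many families, the incidence variety, the dichotomy between positive-dimensional fibres --- handled by intersecting two distinct members through a common $m$-fold point --- and the generically finite case), and then you need only the weaker Xu inequality $C^2 \ge m(m-1)+1$ for a moving family, which you upgrade to $m(m-1)+2$ using the parity of $C^2$ coming from adjunction and $K_X\equiv 0$. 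This trades the gonality refinement of \cite{KSS} for the numerical triviality of the canonical class, which is of course available here, so both routes are legitimate. Two small points. First, the $\delta$-invariant analysis of the extremal case in your opening paragraph is never used afterwards; only the parity observation survives, so that material can be cut. Second, be aware that your final step genuinely requires the \emph{strict} inequality $C^2 > m(m-1)$ for a moving family (Xu's ``$+1$''), not merely the Ein--Lazarsfeld bound $C^2\ge m(m-1)$, which on a hyperelliptic surface already follows from adjunction and would give nothing new after applying parity; you correctly defer exactly this point to \cite{X1}, so the proof is complete modulo that standard lemma, but that citation is carrying the real weight.
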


\section{Seshadri constants}\label{results}
In this section we first consider $\s(L)$ and then prove our results
on $\s(L,1)$.
\subsection{Results about $\s(L)$.}\label{special}
\begin{theorem}\label{odd}
Let $X$ be a hyperelliptic surface of odd type (i.e., of type 1, 3, 5,
or 7). Let $L \equiv (a,b)$ be an
ample line bundle on $X$. 
Then $\s(L) = \min\{a,b\}$. 
\end{theorem}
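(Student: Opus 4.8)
The plan is to establish the two inequalities $\s(L)\le\min\{a,b\}$ and $\s(L)\ge\min\{a,b\}$. First I would record the numerics. Since $X$ has odd type we have $\mu=\gamma$, so by Theorem \ref{serrano} a basis of Num$(X)$ is $A/\mu,\,B$, and from property (1) one gets $(A/\mu)^2=B^2=0$ and $(A/\mu)\cdot B=\tfrac1\mu(A\cdot B)=\tfrac\gamma\mu=1$. Writing $L\equiv a(A/\mu)+bB$, this gives $L^2=2ab$, and by property (3) both $a,b$ are positive integers, so $\min\{a,b\}\ge 1$. Throughout I use that a fibre of $\Phi$ is numerically equivalent to $B$ and a smooth fibre of $\Psi$ to $A$ (this identifies Serrano's basis classes, using that the fibres of $\Phi$ are isomorphic to $B$ and lie on one of the two isotropic rays of the intersection form).

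For the bound $\s(L)\le\min\{a,b\}$ I would use two low-degree curves. A fibre $F$ of $\Phi$ is smooth and numerically equivalent to $B$, so $L\cdot F=a$ and, as $F$ is smooth, $\s(L,x)\le a$ for every $x\in X$. Next, inspecting the table in Theorem \ref{serrano}, for each odd type some singular fibre of $\Psi$ has multiplicity $m_i$ equal to $\mu=\mathrm{lcm}(m_1,\dots,m_s)$ (for types $1,3,5,7$ one has $\mu=2,4,3,6$, and in each case this value occurs among the $m_j$). The reduced part $C$ of such a fibre is a smooth elliptic curve with $m_iC$ numerically a smooth fibre of $\Psi$, i.e. $m_iC\equiv A$; thus $C\equiv A/\mu$ and $L\cdot C=b$. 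Choosing $x\in C$ we obtain $\s(L,x)\le\min\{a,b\}$, hence $\s(L)\le\min\{a,b\}$.

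For the reverse inequality I must check $\dfrac{L\cdot C}{\mathrm{mult}_xC}\ge\min\{a,b\}$ for every irreducible reduced curve $C$ and every $x\in C$. Write $C\equiv\alpha(A/\mu)+\beta B$; intersecting with the nef fibre classes $B$ and $A$ gives $\alpha=C\cdot B\ge 0$ and $\beta=\tfrac1\gamma(C\cdot A)\ge 0$, and $\alpha,\beta\in\ints$ since $\{A/\mu,B\}$ is a lattice basis. Also $L\cdot C=a\beta+b\alpha$ and $C^2=2\alpha\beta$. Put $m=\mathrm{mult}_xC$. If $m=1$: when $\alpha\ge 1$ we have $L\cdot C\ge b\alpha\ge b\ge\min\{a,b\}$; when $\alpha=C\cdot B=0$, the curve $C$ lies in a fibre of $\Phi$, hence equals it (those fibres are irreducible), so $C\equiv B$ and $L\cdot C=a\ge\min\{a,b\}$. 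If $m\ge 2$: property (4) yields $2\alpha\beta=C^2\ge m^2-m$, so $\alpha\beta\ge\binom m2\ge 1$, forcing $\alpha,\beta\ge 1$; then by the AM--GM inequality $\alpha+\beta\ge 2\sqrt{\alpha\beta}\ge\sqrt{2m(m-1)}\ge m$, the last step because $m\ge 2$. Hence $L\cdot C=a\beta+b\alpha\ge\min\{a,b\}(\alpha+\beta)\ge m\min\{a,b\}$. Combining the two inequalities gives $\s(L)=\min\{a,b\}$.

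The computations are routine once the fibre classes are identified, so the only points requiring care are the bookkeeping — that a fibre of $\Phi$ has class $B$ and that the reduced part of a maximal-multiplicity singular fibre of $\Psi$ has class $A/\mu$, which is exactly where oddness of the type (equivalently $\mu=\gamma$) enters — together with the nonnegativity $\alpha,\beta\ge 0$. After that, the lower bound rests only on the elementary consequence $\alpha+\beta\ge m$ of $\alpha\beta\ge\binom m2$.
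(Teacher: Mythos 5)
Your proof is correct, and while your upper bound (the $\Phi$-fibre of class $B$ together with the reduced part, of class $A/\mu$, of a $\Psi$-fibre whose multiplicity equals $\mu$) is exactly the construction the paper uses, your lower bound takes a genuinely different route. The paper argues locally at $x$ via Bezout: intersecting $C$ with the smooth $\Phi$-fibre through $x$ gives $\alpha\ge m$, and intersecting with the (possibly multiple) $\Psi$-fibre through $x$ gives $\mu\beta\ge mn$, whence $\frac{L\cdot C}{m}\ge b+\frac{an}{\mu}$ for curves other than fibres. You instead use only the nonnegativity $\alpha,\beta\ge 0$ together with the global genus-formula bound $C^2=2\alpha\beta\ge m^2-m$ (property (4) of Section \ref{prelims}) and AM--GM to get $\alpha+\beta\ge\sqrt{2m(m-1)}\ge m$ for $m\ge 2$, handling $m=1$ by the dichotomy $\alpha\ge 1$ versus $C$ being a $\Phi$-fibre. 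Your argument is slightly more economical in that it never needs to identify the multiplicity $n$ of the $\Psi$-fibre through $x$, and it is closer in spirit to the Hodge-index arguments the paper deploys later (Proposition \ref{type6-bound}, Theorem \ref{general-non-fibre}); the price is that it only yields the infimum $\s(L)$, whereas the paper's pointwise Bezout bounds $\s(L,x)\ge\min(a,b,b+\frac{an}{\mu})$ retain finer information about $\s(L,x)$ at individual points (as in Farnik's exact computation for type 1). Both arguments are complete and correct.
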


\begin{proof}
We first prove that $\s(L,x) \ge \min\{a,b\}$ for any $x \in X$. We
then show that equality holds for a suitable $x$. 

Note that since $X$ is a
hyperelliptic surface of odd type, $\mu = \gamma$. Hence $B$ is given
by $(0,1)$ in Num$(X)$. On the other hand, $A$ is given by $(2,0),
(4,0), (3,0)$ and $(6,0)$ in types 1, 3, 5 and 7, respectively.

Since the fibres of $\Phi: X \to A/G$ cover $X$, are smooth and are
isomorphic to $B$, there is a smooth curve which is numerically equivalent to
$(0,1)$ that contains any given point $x$. 
Similarly, the fibres of $\Psi: X \to B/G$ cover $X$, but they are not
all smooth. The smooth fibres of $\Psi$ are isomorphic to $A$ and singular
fibres are multiples of smooth fibres. The number of singular fibres
and their multiplicities are completely determined by the type of
$X$. See the table in Theorem \ref{serrano}.

Now let $x \in X$ be an arbitrary point. Let $C$ be a reduced
and irreducible curve on $X$ passing through $x$ with multiplicity
$m \ge 1$. We consider three possibilities for $C$. First, it is a
fibre of $\Phi$; second, it is a fibre of $\Psi$; and third, it is
different from the fibres of $\Phi$ and $\Psi$. 

If $C$ is a fibre of $\Phi$, then $C$ is smooth and is isomorphic to $B$ and is
numerically equivalent to $(0,1)$. In this case, $m = 1$. So the
Seshadri ratio is $L\cdot C = a$. 

If $C$ is a fibre of $\Psi$, then $C$ is not necessarily smooth.
Numerically, $C$ is given by $(\mu,0)$. 
The multiplicity $m$ is determined by the table in Theorem \ref{serrano}. For instance, if $X$
has type 1, then $m = 1$, or $2$. Or, if $X$ has type 3, then $m = 1,
2$, or $4$. In this case, the
Seshadri ratio is $\frac{L\cdot C}{m} = \frac{\mu b}{m}$. 

Now let $C$ be different from the fibres of $\Psi$ and $\Phi$. Let $C$ be
represented by $(\alpha,\beta)$ in Num$(X)$. We use Bezout's theorem
to bound the values of $\alpha$ and $\beta$. Since $x$ is a point of a
smooth fibre $(0,1)$, we have $C \cdot (0,1) = \alpha \ge m$. 
On the other hand, the fibre of $\Psi$ containing $x$ may not be
smooth. In this case, Bezout's theorem gives 
$C \cdot (\mu, 0 ) = \mu \beta \ge mn$, where $n$ is 
the multiplicity of 
the fibre of $\Psi$ containing $x$. 
Thus we have $\frac{L\cdot C}{m} = \frac{a\beta+b\alpha}{m} \ge
b+\frac{an}{\mu}$. 

Since $\mu \ge m$ and $n\ge 1$, we conclude that the Seshadri ratio 
$\frac{L\cdot C}{m} \ge \min(a,b,b+\frac{a}{\mu})$ for any reduced irreducible
curve $C$ passing through $x$. Hence 
$\s(L,x) \ge \min(a,b,b+\frac{a}{\mu})
\ge \min(a,b)$.

Now let $x$ be a point on a singular fibre of $\Psi$ with the maximum possible 
multiplicity. For instance, if $X$ has type 7, $x$ is any point on a fibre of
$\Psi$ of multiplicity 6.
Then, in the notation above, $m = n = \mu$. 
So $\s(L,x) = \min(a,b,a+b) = \min(a,b)$. This completes the proof of the
theorem. 
\end{proof}

\begin{remark} 
The result in Theorem \ref{odd} is proved for hyperelliptic surfaces
of type 1 in \cite[Theorem 3.4]{Far} and our proof essentially follows
from the arguments given by Farnik. In fact, Farnik gives a precise
value for $\s(L,x)$ for any $x$ and any ample line bundle $L$ on a
hyperelliptic surface of type 1.
\end{remark}

Our next result partially answers \cite[Question 1.6]{SS} for hyperelliptic
surfaces. This question asks if $\s(X,L)$ is rational for any surface $X$ and
any ample line bundle $L$ on $X$. 
 So far an affirmative answer to this question has been found
in some cases. 

The case of quartic surfaces $X \subset \proj^3$ and $L = \str_X(1)$ is considered in
\cite[Theorem]{Bau}. It is proved that $\s(X,L)=1,4/3$ or $2$, depending
on certain geometric properties of $X$. In \cite[Theorem A.1]{BS},
it is proved that $\s(X,L)$ is rational if $X$ is an abelian surface
and $L$ is any ample line bundle on $X$. The same result is shown for
Enriques surfaces in \cite[Theorem 3.3]{S}. Finally, \cite{Syz,Fue}
study minimal ruled surfaces. Such surfaces are geometrically ruled
over a smooth curve $C$ and one attaches a certain invariant $e \in
\ints$ to them. If $e \ge 0$, then \cite[Theorem 3.27]{Syz} and \cite[Theorem
4.14]{Fue} show that $\s(X,L) \in \rationals$ for any ample line bundle
$L$ on $X$.

\begin{theorem}\label{rational}
Let $X$ be a hyperelliptic surface of type different from 6 and let $L$ be an ample line bundle
on $X$. Then $\s(L)$ is rational. 
\end{theorem}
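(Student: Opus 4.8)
The plan is to split according to the type of $X$. For the odd types $1,3,5,7$ there is nothing to do, since Theorem \ref{odd} already gives $\s(L)=\min\{a,b\}\in\rationals$. As type $6$ is excluded, I would only need to treat types $2$ and $4$, for which $\mu/\gamma=\tfrac12$; by Theorem \ref{serrano} the group Num$(X)$ then has basis $\{A/\mu,\ \tfrac12 B\}$, and writing $L\equiv(a,b)$ ampleness forces $a,b>0$. Since $A/\mu$ and $\tfrac12 B$ are isotropic with $(A/\mu)\cdot(\tfrac12 B)=\tfrac{1}{\gamma}(A\cdot B)=1$, one gets $L^2=2ab$.

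The heart of the argument is to produce, for these two types, explicit curves with small Seshadri ratios. First, the fibres of $\Phi\colon X\to A/G$ are smooth elliptic curves covering $X$ whose numerical class is $B$; since $\gamma/\mu=2$ in types $2$ and $4$, this reads $B\equiv(0,2)$, so $L\cdot B=2a$ and hence $\s(L,x)\le 2a$ for every $x\in X$. Second, by the table in Theorem \ref{serrano}, $\Psi\colon X\to B/G$ has a singular fibre of multiplicity $\mu$ in each of types $2$ and $4$ (multiplicities $2=\mu$ and $4=\mu$ respectively); its reduced part $R$ is a smooth curve with $R\equiv A/\mu\equiv(1,0)$, so $L\cdot R=b$ and $\s(L,x)\le b$ for any $x\in R$. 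Combining these, $\s(L)\le\min\{2a,b\}$.

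To conclude I would appeal to the dichotomy recalled in the introduction: $\s(L)$ is rational unless $L^2$ is not a perfect square and $\s(L)=\sqrt{L^2}$. Since $\min\{2a,b\}^2=\min\{4a^2,b^2\}\le(2a)(b)=L^2$, with equality exactly when $b=2a$, there are two cases. If $b\ne 2a$, then $\s(L)\le\min\{2a,b\}<\sqrt{L^2}$; as $\s(L)=\s(L,x_0)$ for some $x_0$ by \cite{Ogu} and this value is sub-maximal, it equals $\frac{L\cdot C}{{\rm mult}_{x_0}C}$ for some curve $C$ by \cite[Proposition 1.1]{BS1}, hence is rational. If $b=2a$, then $L^2=4a^2$ is a perfect square, $\sqrt{L^2}=2a\in\rationals$, and $\s(L)$ is rational whether or not it equals $\sqrt{L^2}$. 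Together with Theorem \ref{odd}, this proves the statement.

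The step requiring the most care — though I do not expect a genuine obstacle — is extracting the precise numerical data from Theorem \ref{serrano} in types $2$ and $4$: that $B\equiv(0,2)$, that $\Psi$ carries a multiplicity-$\mu$ singular fibre, and the shape of the intersection pairing. It is also instructive that this method breaks down for type $6$: there $\gamma/\mu=3$, so the analogous estimate is only $\s(L)\le\min\{3a,b\}$, and $\min\{3a,b\}^2$ can be at least $2ab=L^2$ (for instance whenever $2a\le b\le 3a$), so one can no longer force $\s(L)<\sqrt{L^2}$.
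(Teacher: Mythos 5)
Your proposal is correct and follows essentially the same route as the paper: odd types are dispatched by Theorem \ref{odd}, and for types $2$ and $4$ you use the fibres of $\Phi$ (class $(0,2)$) and the reduced multiplicity-$\mu$ fibre of $\Psi$ (class $(1,0)$) to get $\s(L)\le\min\{2a,b\}$, which is sub-maximal unless $b=2a$, in which case $L^2$ is a perfect square. The paper phrases the fibre-of-$\Psi$ bound via the full fibre $(\mu,0)$ with a point of multiplicity $\mu$ rather than its reduced part, but the resulting Seshadri ratio $b$ and the concluding sub-maximality argument are identical.
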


\begin{proof}
Let $X$ be a hyperelliptic surface of type different from 6 and let
$L\equiv (a,b)$ be an ample line bundle
on $X$.  If $X$ has odd type then the assertion follows from Theorem \ref{odd}. 

\underline{$X$ is of type 2:}
If $2a=b$, then $L^2 = 2ab$ is a perfect square and $\s(L)$ is a
rational number (for instance, see \cite[Corollary 1.8]{SS}). 

If $b < 2a$, let $x$ be a point on a singular fibre of $\Psi$. This
fibre is numerically equivalent to $(2,0)$ and $x$ is a point of
multiplicity 2 on it. So $\s(L,x) \le \frac{(a,b)\cdot (2,0)}{2} = b <
\sqrt{2ab} = \sqrt{L^2}$. It follows by \cite[Lemma 1.7]{SS} that $\s(L)
\in \rationals$. 
On the other hand, if $b > 2a$, then let $x
\in X$ be any point. Then $x$ belongs to a fibre of $\Phi$. Note that
all the fibres of $\Phi$ are smooth and are numerically equivalent to
$(0,2)$. So $\s(L,x) \le \frac{(a,b)\cdot (0,2)}{1} = 2a <
\sqrt{L^2}$. Again it follows that $\s(L) \in \rationals$. 
Note that in fact $\s(L,1) \le 2a < \sqrt{L^2}$, if $b > 2a$.

\underline{$X$ is of type 4:} As in the above case, if $2a=b$, then
$\s(L) \in \rationals$. 

If $b < 2a$, let $x$ be a point on a fibre of $\Psi$ of multiplicity
4. Then $\s(L,x) \le  b < \sqrt{L^2}$. On the other hand, let $b > 2a$
and let $x$ be any point. Consider the fibre of $\Phi$ containing
$x$. This fibre is smooth and numerically equivalent to $(0,2)$. Again
as before, $\s(L,x) \le 2a < \sqrt{L^2}$. 
\end{proof}

We have the following result for type 6 hyperelliptic surfaces. 

\begin{theorem}\label{type6-rational}
Let $X$ be a hyperelliptic surface of type 6 and let $L \equiv (a,b)$ be an
ample line bundle on $X$ such that $b$ is not in the interval
$(2a,9a/2)$. Then $\s(L) \in \rationals$.
\end{theorem}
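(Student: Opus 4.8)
The plan is to adapt the argument in the proof of Theorem~\ref{rational}: we will either observe that $L^2$ is a perfect square, or exhibit a point $x\in X$ and an irreducible reduced curve $C$ through $x$ with $\frac{L\cdot C}{\mathrm{mult}_x C}<\sqrt{L^2}$, which forces $\s(L)\in\rationals$ by \cite[Lemma 1.7]{SS}. First I would record the numerical data for type $6$: here $\mu=\mathrm{lcm}(3,3,3)=3$ and $\gamma=|G|=9$, so Serrano's basis of $\mathrm{Num}(X)$ is $A/3,\ B/3$, and $L\equiv(a,b)$ means $L\equiv a(A/3)+b(B/3)$. From $A^2=B^2=0$ and $A\cdot B=9$ we get $(A/3)^2=(B/3)^2=0$ and $(A/3)\cdot(B/3)=1$, hence $L^2=2ab$ and, more generally, $(a,b)\cdot(c,d)=ad+bc$.

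Next I would handle the two ``sub-maximal'' ranges. Suppose $b<2a$. A singular fibre of $\Psi$ is numerically equivalent to $(3,0)$ and has multiplicity $3$ (see the table in Theorem~\ref{serrano}), so its reduced part $F$ is a smooth elliptic curve with $F\equiv(1,0)$. Taking any $x\in F$, we have $\mathrm{mult}_xF=1$ and $L\cdot F=b$, and $b<\sqrt{2ab}$ exactly because $b<2a$; hence $\s(L,x)\le b<\sqrt{L^2}$. Now suppose $b>9a/2$, and let $x\in X$ be arbitrary. The fibre $C$ of $\Phi$ through $x$ is smooth, isomorphic to $B$, and numerically equivalent to $(0,3)$, so $\mathrm{mult}_xC=1$ and $L\cdot C=3a$; and $3a<\sqrt{2ab}$ exactly because $b>9a/2$, so $\s(L,x)\le 3a<\sqrt{L^2}$. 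In either case \cite[Lemma 1.7]{SS} gives $\s(L)\in\rationals$.

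Finally I would dispose of the boundary values still permitted by the hypothesis. If $b=2a$ then $L^2=4a^2$, and if $b=9a/2$ then $L^2=2a\cdot(9a/2)=9a^2$; in both cases $L^2$ is a perfect square, so $\s(L)\in\rationals$ by \cite[Corollary 1.8]{SS}. Since the hypothesis excludes only $b\in(2a,9a/2)$, the four cases $b<2a$, $b=2a$, $b=9a/2$, $b>9a/2$ are exhaustive, which completes the argument.

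I do not expect a genuine obstacle here: the proof is essentially bookkeeping with the two fibrations $\Phi$ and $\Psi$, and the only inputs are the intersection numbers recorded above. The real content is understanding why $(2a,9a/2)$ must be excluded: the reduced singular fibres of $\Psi$ supply only the bound $b$ (useful when $b<2a$) and the fibres of $\Phi$ supply only the bound $3a$ (useful when $b>9a/2$), and the resulting gap is wider than in types $2$ and $4$ precisely because $\gamma/\mu=3$ in type $6$ rather than $2$. Covering $b\in(2a,9a/2)$ would require finer information about curves through special points (cf.\ Discussion~\ref{type6-special}).
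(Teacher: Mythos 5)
Your proof is correct and takes essentially the same route as the paper's: the boundary values $b=2a$ and $b=9a/2$ make $L^2$ a perfect square, and otherwise a singular fibre of $\Psi$ (ratio $b$, sub-maximal when $b<2a$) or a fibre of $\Phi$ (ratio $3a$, sub-maximal when $b>9a/2$) forces $\s(L)\in\rationals$. The only cosmetic difference is that you compute with the reduced singular fibre $F\equiv(1,0)$ at multiplicity $1$, whereas the paper uses the full fibre $(3,0)$ at multiplicity $3$; the Seshadri ratio is the same.
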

\begin{proof}
If $b=2a$ or $b=9a/2$, then $L^2 = 2ab$ is a square and $\s(L)\in
\rationals$. So we assume that either $b < 2a$ or $b > 9a/2$. 

If $b< 2a$, choose a point $x$ on a singular fibre of $\Psi$. Then the
fibre is represented numerically by $(3,0)$ and the multiplicity of
the fibre at $x$ is 3. So $\s(L,x) \le \frac{(a,b)\cdot (3,0)}{3} = b <
\sqrt{2ab}.$ If $b > 9a/2$, then choose any point $x$ and consider a
fibre of $\Phi$ containing it. We have $\s(L,x) \le \frac{(a,b)\cdot (0,3)}{1} = 3a <
\sqrt{2ab}$. Thus $\s(L) \in \rationals$.
\end{proof}

\begin{discussion}\label{type6-special}
Let $X$ be any surface, and let $L$ be ample on $X$. If $x \in X$,
then an easy upper bound for $\s(L,x)$ is given by $\frac{L\cdot C}{{\rm
  mult}_x C}$, where $C$ is a curve containing $x$, {\it provided} this ratio is smaller than
$\sqrt{L^2}$. 

Of course, there are no such curves if $\s(X,L) = \sqrt{L^2}$.
We note below why there are no obvious examples of such curves 
if $X$ is a hyperelliptic surface of type 6 and $L\equiv (a,b)$ with $b \in (2a,9a/2)$. 

According to Theorems \ref{rational} and \ref{type6-rational},
if $X$ is a hyperelliptic surface of type different from 6, or if $X$
has type 6, but $L \equiv  (a,b)$ with $b \notin (2a,9a/2)$, 
then $\frac{L\cdot C}{{\rm  mult}_x C} < \sqrt{L^2}$, for a suitable
$x$ and a suitable fibre $C$ of $\Psi$ or $\Phi$. 
This in turn allows us to conclude $\s(X,L) \in \rationals$ in 
these cases. It is also clear from the proof of Theorem \ref{type6-rational}
that if $X$ has type 6 and $L \equiv (a,b)$ with $b \in (2a,9a/2)$, then 
$\frac{L\cdot C}{{\rm mult}_x C} \ge \sqrt{L^2}$, for {\it any} fibre $C$ of
$\Psi$ or $\Phi$.

In general, for a surface $X$ and an ample line bundle $L$ on $X$, 
in order to conclude that $\s(X,L) \in \rationals$, we must establish the
existence of a suitable pair $x \in C$ for which 
$\frac{L\cdot C}{{\rm mult}_x C} <\sqrt{L^2}$.  
If $X$ is a hyperelliptic surface of type
6 and $L\equiv (a,b)$ with $b \in (2a,9a/2)$, there are no obvious 
candidates for such a pair. One needs more specific information about
singular curves on such a surface. 
\end{discussion}

We do however give a lower bound for $\s(L,x)$
for any $x$ in the following proposition.

\begin{proposition}\label{type6-bound}
Let $X$ be a hyperelliptic surface of type
6 and let $L\equiv (a,b)$ be an ample line bundle with $b \in
(2a,9a/2)$. Then $\s(L,x)\ge (0.7)\sqrt{L^2}$ for all $x \in X$. 
\end{proposition}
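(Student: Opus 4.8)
The plan is to bound the Seshadri ratio $\frac{L\cdot C}{{\rm mult}_x C}$ from below by $(0.7)\sqrt{L^2}$ for every irreducible reduced curve $C$ through a fixed arbitrary point $x$, in the spirit of the proof of Theorem \ref{odd}, but now using the self-intersection bound in property (4) in place of (or in addition to) B\'ezout. First I would record the type-6 numerics: here $\mu=3$ and $\gamma=9$, so the basis of Num$(X)$ is $A/3,\ B/3$, and from $A^2=B^2=0$, $A\cdot B=9$ one gets $L^2=2ab$, while for a curve $C\equiv(\alpha,\beta)$ one has $L\cdot C=a\beta+b\alpha$ and $C^2=2\alpha\beta$. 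The fibres of $\Phi$ are numerically $(0,3)$, and the reduced curves contained in fibres of $\Psi$ are either a general (smooth) fibre $\equiv(3,0)$ or a component $F_i$ of a multiple fibre $3F_i$, which is smooth elliptic with $F_i\equiv(1,0)$. Since $(0,3)$ and $(3,0)$ are nef fibre classes, every curve $C$ satisfies $\alpha\ge 0$ and $\beta\ge 0$; moreover $\alpha=0$ forces $C$ to be a fibre of $\Phi$ and $\beta=0$ forces $C$ to lie in a fibre of $\Psi$.

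Next I would split the curves $C$ through $x$ into three kinds. (i) If $C$ is a fibre of $\Phi$, then $m:={\rm mult}_xC=1$ and the ratio is $L\cdot C=3a$; using $b<9a/2$ one checks $3a\ge(0.7)\sqrt{2ab}$. (ii) If $C$ lies in a fibre of $\Psi$, then $C$ is smooth with class $(3,0)$ or $(1,0)$, so $m=1$ and the ratio is $3b$ or $b$; using $b>2a$ one checks $b\ge(0.7)\sqrt{2ab}$. (iii) Otherwise $\alpha\ge 1$ and $\beta\ge 1$. If $m=1$, the ratio is $a\beta+b\alpha\ge a+b\ge 2\sqrt{ab}\ge(0.7)\sqrt{2ab}$ by the arithmetic–geometric mean inequality. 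If $m\ge 2$, property (4) gives $2\alpha\beta=C^2\ge m^2-m$, so
$$\frac{L\cdot C}{m}=\frac{a\beta+b\alpha}{m}\ge\frac{2\sqrt{ab\,\alpha\beta}}{m}\ge\frac{2\sqrt{ab\cdot\frac{m^2-m}{2}}}{m}=\sqrt{2ab}\,\sqrt{1-\tfrac1m}\ge\tfrac{1}{\sqrt2}\sqrt{2ab}>(0.7)\sqrt{2ab}.$$
Taking the infimum over all $C$ through $x$ then gives $\s(L,x)\ge(0.7)\sqrt{L^2}$, and since $x$ was arbitrary, the proposition follows.

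The computation is essentially routine, and I do not expect a genuine obstacle: unlike the very-general-point statements of Subsection \ref{general}, we use only property (4), which holds at every point, and never Theorem \ref{xu}, so there is no issue with special loci. The one point requiring care is that the self-intersection estimate $C^2\ge m^2-m$ is vacuous when $m=1$, so the multiplicity-one curves — the two kinds of fibres and the case $m=1$ in (iii) — must be handled by hand; these are exactly where the hypothesis $2a<b<9a/2$ enters (only through $b>2a$ and $b<9a/2$), and they are disposed of by the elementary inequalities above. In fact the argument produces the sharper bound $\s(L,x)\ge\tfrac{1}{\sqrt2}\sqrt{L^2}$; I would simply record it with the rounder constant $0.7$ as in the statement.
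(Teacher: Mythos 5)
Your proof is correct and follows essentially the same route as the paper: the paper argues by contradiction, using the Hodge Index Theorem together with $C^2\ge m^2-m$ to rule out $m\ge 2$ and, in the case $m=1$, to force $C$ to be a fibre of $\Phi$ or $\Psi$, which is then excluded by the hypothesis $2a<b<9a/2$ exactly as in your cases (i) and (ii). Your AM--GM step is just the Hodge index inequality $(L\cdot C)^2\ge L^2C^2$ written out for this rank-two lattice, and your observation that the argument actually yields the sharper constant $\tfrac{1}{\sqrt{2}}$ is a valid minor improvement.
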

\begin{proof}
If $\s(L,x) < (0.7)\sqrt{L^2}$ for some $x \in X$, then $\s(L,x) = \frac{L\cdot C}{{\rm
    mult}_x C}$ for an irreducible and reduced curve $C \equiv  (\alpha,
\beta)$ containing $x$. Let $m = {\rm  mult}_x C$. If $m=1$, then
$L\cdot C < \sqrt{L^2}$. Then the Hodge Index Theorem gives $L^2 . C^2
\le (L\cdot C)^2 < L^2$. So $C^2 = 2\alpha\beta < 1$. Thus $\alpha=0$
or $\beta = 0$. Then $C$ is a fibre of $\Phi$ or $\Psi$. But this
is not possible, as mentioned in Discussion \ref{type6-special}.

So assume $m \ge 2$. We know $C^2 \ge m^2-m$ (see Section \ref{prelims}). Applying the Hodge Index
Theorem again, we get $m^2-m < (0.7)^2m^2$, which gives $(0.51)m^2 - m
< 0$. But this is not possible for $m\ge 2$. 
\end{proof}

We use the idea in the above proof again in Theorem 
\ref{general-non-fibre}.

\begin{proposition}\label{even-1}
Let $X$ be a hyperelliptic surface of even type. Let $L \equiv  (a,b)$ be an
ample line bundle on $X$ satisfying the following:
\begin{enumerate}
\item $b \le a$ if $X$ is of type 2;
\item $2b \le a$ if $X$ is of type 4 or 6.  
\end{enumerate}

Then $\s(L) = b$.  
\end{proposition}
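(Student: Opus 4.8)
The plan is to run the same two-part argument used for Theorem~\ref{odd}: first establish the lower bound $\s(L,x) \ge b$ for every $x \in X$ by a Bezout-type analysis of the three kinds of curves through $x$, and then exhibit a point where the value $b$ is achieved by a fibre of $\Psi$. For the even types we have $B$ numerically equivalent to $(0,2)$ and $A$ equal to $(2,0)$ (type 2), $(4,0)$ (type 4), or $(3,0)$ (type 6), while the maximal multiplicity of a singular fibre of $\Psi$ is $2$, $4$, and $3$ respectively; in each case $\mu/\gamma = 1$, so a fibre of $\Phi$ is numerically $(0,2)$ and a fibre of $\Psi$ is numerically $(\mu,0)$.

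First I would dispose of the two fibre cases. A fibre of $\Phi$ is smooth and numerically $(0,2)$, so its Seshadri ratio is $L \cdot (0,2) = 2a$. A fibre of $\Psi$ through $x$ is numerically $(\mu,0)$ with multiplicity $m$ at $x$ dividing $\mu$, giving ratio $\frac{L\cdot(\mu,0)}{m} = \frac{\mu b}{m} \ge b$. Next, for a curve $C \equiv (\alpha,\beta)$ which is neither kind, I would intersect with the smooth fibre $(0,2)$ through $x$ to get $2\alpha \ge m$, hence $\alpha \ge m/2$, and intersect with the fibre $(\mu,0)$ through $x$ to get $\mu\beta \ge mn \ge m$, hence $\beta \ge m/\mu$. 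Therefore
\[
\frac{L\cdot C}{m} = \frac{a\beta + b\alpha}{m} \ge \frac{a}{\mu} + \frac{b}{2}.
\]
Combining, $\s(L,x) \ge \min\!\left(2a,\ b,\ \tfrac{a}{\mu}+\tfrac{b}{2}\right)$. The hypotheses are exactly what forces this minimum to equal $b$: when $b \le a$ (type 2) or $2b \le a$ (types 4, 6) one checks $2a \ge b$ trivially, and $\tfrac{a}{\mu}+\tfrac{b}{2} \ge b$ is equivalent to $\tfrac{a}{\mu} \ge \tfrac{b}{2}$, i.e.\ $2a \ge \mu b$, which holds since $\mu = 2,4,3$ in types $2,4,6$ and the hypothesis gives $a \ge b$ (type 2, $\mu=2$) or $a \ge 2b$ (types 4 and 6, $\mu = 4$ and $3$). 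So $\s(L,x) \ge b$ for all $x$.

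For the reverse inequality, I would take $x$ to be a point of maximal multiplicity on a singular fibre $C$ of $\Psi$, so $C \equiv (\mu,0)$ and $\mathrm{mult}_x C = \mu$; then $\frac{L\cdot C}{\mu} = \frac{\mu b}{\mu} = b$, whence $\s(L,x) \le b$ and therefore $\s(L) \le \s(L,x) = b$. Together with the lower bound this gives $\s(L) = b$. The main point requiring care is the inequality $\tfrac{a}{\mu} + \tfrac{b}{2} \ge b$ in each type, i.e.\ verifying that the stated numerical hypotheses on $(a,b)$ are precisely calibrated to the values of $\mu$; this is a short case check rather than a genuine obstacle. One should also note that such a singular fibre of $\Psi$ exists in every even type, which is immediate from the table in Theorem~\ref{serrano}.
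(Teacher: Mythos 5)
Your proposal takes the same route as the paper: a Bezout lower bound $\s(L,x)\ge b$ for every $x$ obtained by separating the fibres of $\Phi$, the fibres of $\Psi$, and all other curves, followed by the upper bound at a point of maximal multiplicity on a singular fibre of $\Psi$. There is, however, a numerical slip in your setup. For the even types $\mu/\gamma$ is \emph{not} $1$: it equals $1/2$ for types 2 and 4 and $1/3$ for type 6, so a fibre of $\Phi$ is numerically $(0,\gamma/\mu)$, which is $(0,2)$ in types 2 and 4 but $(0,3)$ in type 6. In type 6 Bezout therefore gives only $3\alpha\ge m$, i.e.\ $\alpha\ge m/3$, not the $\alpha\ge m/2$ you use, so your bound $\frac{L\cdot C}{m}\ge \frac{a}{\mu}+\frac{b}{2}$ is unjustified in that case. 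The conclusion survives because the correct estimate $\frac{a}{3}+\frac{b}{3}=\frac{a+b}{3}\ge\frac{2b+b}{3}=b$ still holds under the hypothesis $a\ge 2b$; this is precisely the computation the paper carries out ($3\alpha\ge m$ and $3\beta\ge m$ for type 6, versus $2\alpha\ge m$ and $4\beta\ge m$ for type 4). With that one correction your argument coincides with the paper's proof, including the existence in every even type of a singular fibre of $\Psi$ of multiplicity exactly $\mu$, which furnishes the point where $\s(L,x)\le b$.
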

\begin{proof}
First let $X$ be of type 2. If $x$ is a point on a singular
fibre of $\Psi$, then as in the proof of Theorem \ref{rational},
$\s(L,x) \le b$. 

Now let $x \in X$ be an arbitrary point. Then $x$ is in a fibre of
$\Phi$ which is represented by $(0,2)$. The Seshadri ratio for this
fibre is $\frac{L\cdot (0,2)}{1} = 2a \ge b$. The Seshadri ratio for
any fibre of $\Psi$ containing $x$ is at least $b$. 
On the other hand, let 
$C \equiv  (\alpha,\beta)$ be an irreducible and reduced curve, different
from the fibres of $\Psi$ or
$\Phi$, passing through $x$ with multiplicity $m \ge 1$. 
Then (as in the
proof of Theorem \ref{odd}) Bezout's
theorem gives $2\alpha\ge m$ and $2\beta \ge m$. So 
$\frac{L\cdot C }{m} = \frac{a\beta+b\alpha}{m} \ge \frac{a+b}{2} \ge
b$. In other words, $\s(X,L,x) = \inf \frac{L \cdot C}{{\rm mult}_x C}
\ge b$. 

Thus $\s(L,x) \ge b$ for all $x \in X$ and $\s(L,x) \le b$ if $x$ is
on a singular fibre of $\Psi$. It follows that $\s(L) = b$. 

Now let $X$ be of type 4 or 6. As in the above case, if $x$ is on a
singular fibre of $\Psi$, then $\s(L,x) \le b$ (when $X$ is of type 4,
  we take the point $x$ on a fibre of multiplicity 4). 

Now let $x \in X$ be arbitrary and let $C \equiv (\alpha,\beta)$ 
be an irreducible and reduced curve, different
from the fibres of $\Psi$ or $\Phi$, passing through $x$ with 
multiplicity $m \ge 1$. Then we have $4\beta \ge m$ and $2\alpha \ge
m$ when $X$ is of type 4 and 
$3\beta \ge m$ and $3\alpha \ge
m$ when $X$ is of type 6. In either case,
$\frac{L\cdot C }{m} = \frac{a\beta+b\alpha}{m} \ge b$. As above, we
conclude that $\s(L) = b$. 
\end{proof}

\begin{proposition}\label{even-2}
Let $X$ be a hyperelliptic surface of even type. Let $L \equiv  (a,b)$ be an
ample line bundle on $X$. Then the following statements hold: 
\begin{enumerate}
\item Let $X$ be of type 2. If $b \ge 3a$, then $\s(L,x) = 2a$ for all
  $x \in X$. 
\item Let $X$ be of type 4. If $b \ge 7a/2$, then $\s(L,x) = 2a$ for all
  $x \in X$. 
\item Let $X$ be of type 6. If $b \ge 8a$, then $\s(L,x) = 3a$ for all
  $x \in X$. 
\end{enumerate}
\end{proposition}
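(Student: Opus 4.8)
The strategy is the same as in Propositions~\ref{even-1} and \ref{even-2}'s predecessors: establish the upper bound $\s(L,x) \le (\text{claimed value})$ by exhibiting a fibre through $x$, then show the reverse inequality $\s(L,x) \ge (\text{claimed value})$ by a case analysis on the curve realizing the Seshadri constant. For the upper bound in each case, note that every $x \in X$ lies on a (smooth) fibre of $\Phi$, which is numerically $(0,2)$ in types 2 and 4, and $(0,3)$ in type 6; since this fibre is smooth, the Seshadri ratio is $L \cdot (0,2) = 2a$ (types 2, 4) or $L\cdot(0,3) = 3a$ (type 6). So $\s(L,x) \le 2a$ (resp.\ $3a$) for all $x$.

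For the lower bound, fix $x \in X$ and let $C \equiv (\alpha,\beta)$ be an irreducible reduced curve through $x$ with $m = {\rm mult}_x C$; we want $\frac{L\cdot C}{m} = \frac{a\beta + b\alpha}{m} \ge 2a$ (resp.\ $3a$). If $C$ is a fibre of $\Phi$ or $\Psi$, this is immediate from the numerical types and the multiplicities in Theorem~\ref{serrano} (the fibres of $\Psi$ being $(2,0)$ or $(3,0)$, with Seshadri ratio $\mu b / m \ge b$, which exceeds $2a$ resp.\ $3a$ under the hypotheses $b \ge 3a$, $b\ge 7a/2$, $b \ge 8a$). If $C$ is neither, then Bezout against the smooth fibres through $x$ gives, as in the proof of Theorem~\ref{odd} and Proposition~\ref{even-1}, the inequalities $2\alpha \ge m$, $2\beta \ge m$ in type 2; $2\alpha \ge m$, $4\beta \ge m$ in type 4; and $3\alpha \ge m$, $3\beta \ge m$ in type 6. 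Plugging in $\alpha \ge m/2$ (resp.\ $m/3$) yields $\frac{a\beta + b\alpha}{m} \ge b\cdot\frac{\alpha}{m} \ge b/2$ (types 2, 4) or $b/3$ (type 6); one then checks that the hypothesis on $b$ forces this to be $\ge 2a$ (resp.\ $3a$), perhaps after also using the $\beta$ bound to pick up the $a\beta$ term. This shows $\s(L,x) \ge 2a$ (resp.\ $3a$) for every $x$, and combined with the upper bound we get equality for all $x$.

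The one point needing a little care is verifying that the numerical constraints coming from Bezout, together with the stated lower bounds on $b$, really do suffice in every case — in particular checking that the worst case $\alpha = m/2$ (or $m/3$), $\beta$ as small as allowed, still clears the bar. For type 6, $3\alpha \ge m$ gives $b\alpha/m \ge b/3 \ge 8a/3$, which is less than $3a$, so here one genuinely needs to also use $a\beta$: from $3\beta \ge m$ we get $a\beta/m \ge a/3$, and $b/3 + a/3 \ge 8a/3 + a/3 = 3a$, which is exactly tight. Similarly for type 4, $b/2 \ge 7a/4 < 2a$, so one adds $a\beta/m \ge a/4$ to get $7a/4 + a/4 = 2a$; and for type 2, $b/2 \ge 3a/2$ plus $a\beta/m \ge a/2$ gives $2a$. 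So the arithmetic is tight in each case, which explains the precise constants $3a$, $7a/2$, $8a$ in the hypotheses, and this tightness bookkeeping is the main (though modest) obstacle. I expect no serious difficulty beyond organizing these three parallel computations cleanly.
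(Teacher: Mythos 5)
Your proof is correct and follows essentially the same route as the paper: the upper bound comes from the smooth fibre of $\Phi$ through $x$, the lower bound from the fibres of $\Psi$ and from Bezout against both fibres for curves that are not fibres, with the arithmetic exactly tight under the stated hypotheses on $b$. (Only a cosmetic slip: in type 4 the fibre of $\Psi$ is numerically $(4,0)$, not $(2,0)$, but your inequalities $4\beta \ge m$ and $\mu b/m \ge b$ are the correct ones.)
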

\begin{proof}
The proof is similar to the proof of Proposition \ref{even-1}, so we will
only give a brief sketch. 

First let $X$ be of type 2. Let $x \in X$ be any point. Since a fibre
of $\Phi$ contains $x$, we have $\s(L,x) \le \frac{L\cdot (0,2)}{1} =
2a$. If $x$ is on a singular fibre of $\Psi$, then the corresponding
Seshadri ratio is $\frac{L\cdot (2,0)}{2} = b \ge 2a$. If $x$ is on a
smooth fibre of $\Psi$, then the corresponding
Seshadri ratio is $\frac{L\cdot (2,0)}{1} = 2b \ge 2a$.

Now let $C\equiv  (\alpha, \beta)$ be an irreducible and reduced curve, different
from the fibres of $\Psi$ or $\Phi$, passing through $x$ with multiplicity $m \ge 1$. 
Bezout's theorem gives $2\alpha \ge m$ and $2\beta \ge m$. So 
$\frac{L\cdot C }{m} = \frac{a\beta+b\alpha}{m} \ge \frac{a+b}{2} \ge
2a$, by hypothesis.  So we conclude that $\s(L,x) = 2a$ for all $x\in X$.

The proof for types 4 and 6 is similar.
\end{proof}

\subsection{Results about $\s(L,1)$.}\label{general}

\begin{theorem}\label{general-non-fibre}
Let $X$ be a hyperelliptic surface and let $L$ be an ample line bundle
on $X$. Suppose that $C \equiv  (\alpha, \beta)$ is an irreducible, reduced 
curve with $\alpha \ne 0$,
$\beta \ne 0$ and which passes through a very general point with
multiplicity $m \ge 1$. Then $\frac{L\cdot C}{m} \ge (0.93)\sqrt{L^2}$. 
\end{theorem}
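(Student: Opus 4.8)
The plan is to combine the Bézout-type lower bounds on $\alpha$ and $\beta$ coming from the two fibrations with the improved self-intersection bound of Theorem \ref{xu} (valid at a very general point), and then run a Hodge-index-style estimate in the spirit of Proposition \ref{type6-bound}. Write $L \equiv (a,b)$, so that $L^2 = 2ab$, and $C \equiv (\alpha,\beta)$ with $\alpha,\beta \ge 1$. Since $C$ is not a fibre of $\Phi$ or $\Psi$, intersecting $C$ with the smooth fibre through $x$ (numerically $(0,\mu/\gamma)$ or a multiple) and with the fibre of $\Psi$ through $x$ gives, as in the proofs of Theorems \ref{odd} and \ref{rational}, inequalities of the shape $\alpha \ge cm$ and $\beta \ge c'm$ for explicit positive constants depending only on the type; in particular $\alpha,\beta$ are bounded below in terms of $m$. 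Then $L\cdot C = a\beta + b\alpha$, and by the AM–GM inequality $a\beta + b\alpha \ge 2\sqrt{ab\,\alpha\beta} = \sqrt{L^2}\,\sqrt{2\alpha\beta}$. So it suffices to show $\sqrt{2\alpha\beta} \ge (0.93)\, m$, i.e. $\alpha\beta \ge (0.93)^2 m^2/2 \approx (0.4325)\,m^2$.

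First I would dispose of the case $m = 1$: here $\alpha,\beta \ge 1$, so $2\alpha\beta \ge 2 > (0.93)^2$, and we are done immediately. For $m \ge 2$, the idea is that $C$ passes through a very general point with multiplicity $m$, so Theorem \ref{xu} gives $C^2 = 2\alpha\beta \ge m^2 - m + 2$, hence $\alpha\beta \ge (m^2-m+2)/2$. Combined with AM–GM, $\frac{L\cdot C}{m} \ge \frac{\sqrt{L^2}}{m}\sqrt{m^2-m+2}$, so it remains to check that $\frac{m^2-m+2}{m^2} \ge (0.93)^2 = 0.8649$ for all integers $m \ge 2$. At $m = 2$ the left side is $4/4 = 1$; at $m = 3$ it is $8/9 \approx 0.889$; at $m = 4$ it is $14/16 = 0.875$; and as $m \to \infty$ the expression $1 - \frac{1}{m} + \frac{2}{m^2}$ decreases toward $1$ from... — one must be a little careful here, since $1 - \tfrac1m + \tfrac2{m^2}$ is not monotone. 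Its derivative in $m$ is $\tfrac1{m^2} - \tfrac4{m^3} = \tfrac{m-4}{m^3}$, so the function decreases on $2 \le m \le 4$ and increases thereafter, with minimum value $14/16 = 0.875$ at $m = 4$. Since $0.875 > 0.8649$, the bound $\frac{L\cdot C}{m} \ge (0.93)\sqrt{L^2}$ holds for every $m \ge 2$, completing the argument.

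The one subtlety to check carefully is that I have not actually used the Bézout bounds on $\alpha,\beta$ beyond $\alpha,\beta \ge 1$ — the work is really being done by Theorem \ref{xu}, which applies precisely because $x$ is very general and $m \ge 2$; the hypothesis $\alpha \ne 0 \ne \beta$ (equivalently, $C$ is not a fibre) is what guarantees $C^2 = 2\alpha\beta > 0$, so that the very-general-point bound is not vacuous and AM–GM does not degenerate. I expect no real obstacle here: the only thing to get right is the elementary minimization of $(m^2-m+2)/m^2$ over integers $m \ge 2$, where the non-monotonicity means one cannot just take the $m\to\infty$ limit but must note the minimum occurs at $m=4$. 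The constant $0.93$ is then just a round number chosen below $\sqrt{7/8} = \sqrt{0.875} \approx 0.9354$.
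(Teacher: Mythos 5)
Your proof is correct and follows essentially the same route as the paper: both arguments split into the cases $m=1$ and $m\ge 2$, invoke Theorem \ref{xu} for the bound $C^2 \ge m^2-m+2$, and reduce to the numerical inequality $(m^2-m+2)/m^2 \ge (0.93)^2$. Your AM--GM step $a\beta+b\alpha \ge \sqrt{L^2}\sqrt{C^2}$ is just the Hodge Index inequality written out for this rank-two lattice, and your minimization at $m=4$ (giving $7/8 > 0.8649$) is an equivalent, equally elementary substitute for the paper's discriminant computation for the quadratic $(0.13)m^2 - m + 2$.
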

\begin{proof}
First, let $m=1$. If $L\cdot C < (0.93)\sqrt{L^2}$, then the
Hodge Index Theorem gives $C^2 < (0.93)^2$. So $C^2 = 2\alpha \beta =
0$, which violates the hypothesis on $C$. 

So assume $m \ge 2$. Then we have $C^2 \ge m^2-m+2$, by 
Theorem \ref{xu}. Again, the Hodge
Index Theorem gives $m^2-m+2 < (0.93)^2m^2$. So $m$ satisfies the
quadratic relation $(0.13)m^2-m+2 < 0$. But it is easy to see that the
quadratic expression $(0.13)m^2-m+2$ is always positive, since it
grows as $m$ goes to $\infty$ or $-\infty$ and its discriminant is
$1-8(0.13) = -0.04 < 0$. 
\end{proof}
\begin{remark}
In the above proof, we used the 
fact that the quadratic  $(1-\delta^2)m^2-m+2$ is positive for all $m \ge 1$, 
where $\delta = 0.93$. In order to get a better bound in Theorem 
\ref{general-non-fibre}, we have to increase $\delta$. But this
  forces the above quadratic to become negative for some $m$. For instance, if
  $\delta = 0.94$, then the quadratic $(1-0.94^2)m^2-m+2$ is negative
  for $m=4, 5$. Similarly, for $\delta=0.99$, the quadratic 
$(1-0.99^2)m^2-m+2$ is negative for $2 \le m \le 48$. 
As $\delta$ approaches 1, the set $\{m ~|~ (1-\delta^2)m^2-m+2 <
0\}$ keeps increasing.

%$m$ for which 
%$(1-\delta^2)m^2-m+2 < 0$ also increases.

As $\delta$ approaches 1, more precise information about
$L\cdot C$ for curves passing through very general points will be
required to prove the inequality  $\frac{L\cdot C}{m} \ge \delta
\sqrt{L^2}$. This may be possible to do for specific line bundles $L$. 
\end{remark}

As a corollary to Theorem \ref{general-non-fibre},  we obtain our main theorem about $\s(L,1)$ for
ample line bundles on hyperelliptic surfaces. 

\begin{theorem}\label{general-seshadri-constant}
Let $X$ be a hyperelliptic surface and let $L$ be an ample line bundle
on $X$. If $\s(L,1) < (0.93)\sqrt{L^2}$, then $\s(L,1) =
\min(L\cdot A, L\cdot B)$.
\end{theorem}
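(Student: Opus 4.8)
The plan is to use the sub-maximality hypothesis to force the curve computing $\s(L,1)$ to be a fibre of one of the two elliptic fibrations on $X$, and then to read off its value. Since $\s(L,1) < (0.93)\sqrt{L^2} < \sqrt{L^2}$, the Seshadri constant at a very general point is sub-maximal. So I would fix a very general point $x \in X$; by \cite{Ogu} we have $\s(L,1) = \s(L,x)$, and by \cite[Proposition 1.1]{BS1} there is an irreducible and reduced curve $C$ through $x$ with $\s(L,1) = \frac{L\cdot C}{m}$, where $m = {\rm mult}_x C$. Writing $C \equiv (\alpha,\beta)$, the nefness of $A$ and $B$ gives $\alpha,\beta \ge 0$, and effectivity of $C$ gives that at least one of $\alpha,\beta$ is positive.

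Next I would eliminate the case $\alpha \ne 0$ and $\beta \ne 0$: in that situation Theorem \ref{general-non-fibre} applies to $C$ (which passes through the very general point $x$) and yields $\frac{L\cdot C}{m} \ge (0.93)\sqrt{L^2}$, contradicting the hypothesis. Hence $\alpha = 0$ or $\beta = 0$. If $\alpha = 0$, then $C$ is numerically proportional to $B$, so $C\cdot B = 0$ since $B^2 = 0$; because $C$ is irreducible and the fibres of $\Phi$ (all smooth, irreducible, numerically equivalent to $B$, and sweeping out $X$) form the fibres of a fibration, an irreducible curve meeting a fibre in degree $0$ must be a fibre, so $C$ is a fibre of $\Phi$. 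Then $m = 1$ and $L\cdot C = L\cdot B$. Symmetrically, if $\beta = 0$ then $C$ is numerically proportional to $A$, so $C\cdot A = 0$ and $C$ is a component of a fibre of $\Psi$. Since the singular fibres of $\Psi$ are a finite union of curves, the very general point $x$ lies on none of them, so the fibre of $\Psi$ through $x$ is smooth and irreducible and $C$ must equal it; again $m = 1$ and $L\cdot C = L\cdot A$.

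To conclude I would compare with the obvious upper bound: the fibre of $\Phi$ through $x$ is a smooth curve numerically equivalent to $B$ with multiplicity $1$ at $x$, and (as $x$ avoids the singular fibres of $\Psi$) the fibre of $\Psi$ through $x$ is a smooth curve numerically equivalent to $A$ with multiplicity $1$ at $x$, so $\s(L,1) = \s(L,x) \le \min(L\cdot A, L\cdot B)$. On the other hand, the case analysis above shows $\s(L,1)$ equals $L\cdot A$ or $L\cdot B$, hence $\s(L,1) \ge \min(L\cdot A, L\cdot B)$, and equality follows.

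All steps are short once Theorem \ref{general-non-fibre} is in hand, and that is where the real work lies. The only points requiring a little care are the implications ``$C\cdot B = 0 \Rightarrow C$ is a fibre of $\Phi$'' and ``$C\cdot A = 0 \Rightarrow C$ is a fibre of $\Psi$'', which rest on the structure of the two fibrations together with the fact that a very general point of $X$ is not contained in the finitely many singular fibres of $\Psi$. I do not anticipate any serious obstacle beyond Theorem \ref{general-non-fibre} itself.
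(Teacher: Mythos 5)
Your proposal is correct and follows essentially the same route as the paper: apply Theorem \ref{general-non-fibre} to force $\alpha=0$ or $\beta=0$, identify $C$ as a fibre of $\Phi$ or $\Psi$, and use very generality of $x$ to avoid the singular fibres of $\Psi$. You simply make explicit a couple of steps the paper leaves implicit (why $C\cdot B=0$ forces $C$ to be a fibre, and the upper bound $\s(L,1)\le\min(L\cdot A,L\cdot B)$), which is fine.
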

\begin{proof}
If $\s(L,1) \ge (0.93) \sqrt{L^2}$, then there is nothing to prove. Otherwise,
we have $\s(L,1) = \frac{L\cdot C}{m}$, where $C$ is a reduced and
irreducible curve passing through a very general point with
multiplicity $m$. Let $C \equiv (\alpha,\beta)$. By Theorem
\ref{general-non-fibre}, either $\alpha = 0$ or $\beta = 0$.  
In other words, $C$ is a fibre of $\Phi$ or $\Psi$. Since $x$ is a
very general point, we may assume that it does not lie on any of the 
finitely many singular fibres of $\Psi$. Thus $C$ is smooth and
isomorphic to $B$ or $A$. Hence $\s(L,1) = \min(L\cdot A, L\cdot B)$. 
%If $C$ is different from a fibre of $\Phi$ or $\Psi$,
%then Theorem \ref{general-non-fibre} applies. Otherwise, $m=1$ and
%$\s(L,1) = \min(L\cdot A, L\cdot B)$. Indeed, since $x$ is a very general
%point, we can assume that it does not lie on any of the finitely many
%singular fibres of $\Psi$.
\end{proof}

We next consider different types of hyperelliptic surfaces and prove
specific results about $\s(L,1)$. 

\begin{theorem}\label{type1}
Let $X$ be a hyperelliptic surface of type 1. Let $L \equiv (a,b)$ be an
ample line bundle on $X$. Then $\s(L,1) = \min(a,2b)$.
\end{theorem}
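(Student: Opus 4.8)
The plan is to combine Theorem \ref{general-seshadri-constant} with an explicit computation of $\sqrt{L^2}$, $L\cdot A$, and $L\cdot B$ in terms of $(a,b)$ for a type 1 surface. For type 1 we have $\mu = \gamma = 2$, so the basis of $\mathrm{Num}(X)$ from Theorem \ref{serrano} is $A/2$ and $B$, and $L \equiv (a,b)$ means $L \equiv a\cdot(A/2) + b\cdot B$. Using the intersection numbers $A^2 = B^2 = 0$ and $A\cdot B = \gamma = 2$, I would first compute $L^2 = 2ab$, $L\cdot A = (a\cdot A/2 + bB)\cdot A = 2b$ (since $(A/2)\cdot A = 0$ and $B\cdot A = 2$), and $L\cdot B = (a\cdot A/2 + bB)\cdot B = a$. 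So the two natural numbers appearing in Theorem \ref{general-seshadri-constant} are $L\cdot A = 2b$ and $L\cdot B = a$, and the maximal value is $\sqrt{L^2} = \sqrt{2ab}$.

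Next I would split into two cases according to whether $\min(a,2b)$ is at least $(0.93)\sqrt{2ab}$ or not. If $\min(a,2b) < (0.93)\sqrt{2ab}$, then in particular $\s(L,1) \le \min(L\cdot A, L\cdot B) = \min(2b,a) < (0.93)\sqrt{L^2}$, since a fibre of $\Phi$ or $\Psi$ through a very general point realizes the ratio $\min(2b,a)$; hence by Theorem \ref{general-seshadri-constant} we get $\s(L,1) = \min(L\cdot A, L\cdot B) = \min(a,2b)$, as desired. In the remaining case $\min(a,2b) \ge (0.93)\sqrt{2ab}$: here I claim we still have $\s(L,1) = \min(a,2b)$. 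The upper bound $\s(L,1) \le \min(2b,a)$ always holds (via the smooth fibres), so it remains to show $\s(L,1) \ge \min(a,2b)$. This follows from the argument pattern already used in the proof of Theorem \ref{odd} (the odd-type analogue) and Proposition \ref{even-1}: for an arbitrary curve $C \equiv (\alpha,\beta)$ through $x$ with multiplicity $m$ that is neither a fibre of $\Phi$ nor of $\Psi$, Bezout's theorem against a smooth fibre $(0,1)$ gives $\alpha \ge m$ and against a smooth fibre $(2,0)$ gives $2\beta \ge m$, so $\frac{L\cdot C}{m} = \frac{2a\beta + b\alpha}{m} \ge a + b \ge \min(a,2b)$; combined with the fibre ratios $a$ and $2b$ (or $b$ for a singular fibre of $\Psi$, but singular fibres do not pass through a very general point) one gets $\s(L,x) \ge \min(a,2b)$ for very general $x$.

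Actually, since we want $\s(L,1)$ rather than $\s(L)$, the cleaner route avoids case analysis entirely: always $\s(L,1) \ge \s(L)$-type lower bounds are not quite enough, but the bound $\s(L,x) \ge \min(a,b,b+\tfrac{a}{2})$ ... — here I would be careful, because the relevant quantities for type 1 with the $(a,b)$ normalization differ from the odd-type general statement. Instead I would argue directly: for a very general $x$, the only curves through $x$ are the fibres of $\Phi$ (giving ratio $L\cdot B = a$), and curves $C\equiv(\alpha,\beta)$ with $\beta\ne 0$; if also $\alpha\ne 0$ then Theorem \ref{general-non-fibre} gives $\frac{L\cdot C}{m}\ge (0.93)\sqrt{L^2} \ge \min(a,2b)$ provided $\min(a,2b)\le(0.93)\sqrt{L^2}$, and if $\alpha = 0$ then $C$ is a fibre of $\Psi$, which is smooth through a very general point, giving ratio $L\cdot A = 2b$. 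Putting these together with the trivial upper bound yields $\s(L,1) = \min(a,2b)$ whenever $\min(a,2b) \le (0.93)\sqrt{L^2}$; and when $\min(a,2b) > (0.93)\sqrt{L^2}$ we supply the Bezout argument above to get the same conclusion.

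The main obstacle is bookkeeping: getting the intersection numbers $L\cdot A = 2b$ and $L\cdot B = a$ right for the specific type 1 normalization (note the asymmetry — it is $2b$, not $b$, because $A$ rather than $A/2$ is paired), and then handling the borderline case $\min(a,2b) > (0.93)\sqrt{L^2}$ cleanly via Bezout so that no curve through a very general point can beat $\min(a,2b)$. Everything else is a direct appeal to Theorem \ref{general-seshadri-constant} and the fact that a very general point avoids the finitely many singular fibres of $\Psi$.
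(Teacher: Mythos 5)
Your overall strategy is sound and, once the scaffolding is stripped away, it reduces to exactly the paper's argument: the upper bound $\s(L,1)\le\min(a,2b)$ comes from the fibres $B\equiv(0,1)$ and $A\equiv(2,0)$ through a very general point, and the lower bound comes from Bezout applied to a curve $C\equiv(\alpha,\beta)$ with $\alpha\beta\ne0$, giving $\alpha\ge m$ and $\beta\ge m/2$, hence $\frac{L\cdot C}{m}\ge\frac{a}{2}+b\ge\min(a,2b)$. In particular, the case split on whether $\min(a,2b)$ lies below $(0.93)\sqrt{2ab}$ is unnecessary: the Bezout bound works unconditionally, which is precisely why type 1 admits an exact answer with no exceptional range, unlike types 2--7.

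That said, there are two bookkeeping errors to repair. First, with $L\equiv a(A/2)+bB$, $C\equiv\alpha(A/2)+\beta B$ and $(A/2)\cdot B=\gamma/2=1$, one has $L\cdot C=a\beta+b\alpha$, not $2a\beta+b\alpha$; your intermediate bound $\frac{L\cdot C}{m}\ge a+b$ is therefore not justified, although the correct computation still gives $\frac{a}{2}+b\ge\min(a,2b)$, so the conclusion survives. Second, in your final trichotomy the two fibrations are swapped: a class with $\alpha=0$ is proportional to $B$ and is a fibre of $\Phi$ with ratio $L\cdot B=a$, whereas a fibre of $\Psi$ is $A\equiv(2,0)$, i.e.\ has $\beta=0$; as written, your case analysis mislabels the $\alpha=0$ case and omits the classes with $\beta=0$. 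Neither slip is fatal---all curve classes are covered once the labels are corrected---but both must be fixed for the proof to read correctly.
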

\begin{proof}
We repeat the proof that is already essentially given in 
\cite[Theorem 3.4]{Far}. This proof illustrates the special property
of type 1 hyperelliptic surfaces in the sense that the Seshadri
constants are always computed by the fibres of $\Phi$ or $\Psi$. 
 
Note that when $X$ has type 1, a fibre $B$ of $\Phi$ is given by
$(0,1)$ and a smooth fibre $A$ of $\Psi$ is given by $(2,0)$. So
$L\cdot A = 2b$ and $L \cdot B = a$. Since a very general point $x
\in X$ always belongs to a fibre $B$ and a smooth fibre $A$, we have 
$\s(L,1) = \s(L,x) \le \min(a,2b)$. 

Now if $C \equiv  (\alpha, \beta)$ is a curve with $\alpha\beta \ne 0$ and
it passes through a very general point with multiplicity $m$, then
Bezout's theorem gives $\alpha \ge m$ and $\beta \ge m/2$. Thus
$\frac{L\cdot C}{m} = \frac{a\beta+b\alpha}{m} \ge \frac{a}{2}+b \ge
\min(a,2b)$. It follows that $\s(L,1) = \min(a,2b)$.
\end{proof}

%\begin{remark}
%\end{remark}

\begin{theorem}\label{type2}
Let $X$ be a hyperelliptic surface of type 2. Let $L \equiv (a,b)$ be an
ample line bundle on $X$. Then the following statements hold: 
\begin{enumerate}
\item  If $\frac{2\min(a,b)}{(0.93)^2} \le
\rm{max}(a,b)$, then $\s(L,1) = 2\min(a,b)$. 
\item If $\frac{2\min(a,b)}{(0.93)^2} \ge \rm{max}(a,b)$, then $\s(L,1) \ge
(0.93)\sqrt{L^2}$.
\end{enumerate}
\end{theorem}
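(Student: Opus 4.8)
The plan is to reduce the statement entirely to Theorem~\ref{general-seshadri-constant} after recording the intersection data of a type~2 surface. For type~2 we have $\mu = 2$ and $\gamma = |G| = 4$, so by Serrano's theorem $\mathrm{Num}(X)$ has basis $A/2,\, B/2$ with $(A/2)^2 = (B/2)^2 = 0$ and $(A/2)\cdot(B/2) = \gamma/4 = 1$; consequently the intersection form in these coordinates is $(x_1,x_2)\cdot(y_1,y_2) = x_1y_2 + x_2y_1$. Writing $L \equiv (a,b)$, this gives $L^2 = 2ab$. The curve $A$ (a smooth fibre of $\Psi$) has class $(2,0)$ and the curve $B$ (a fibre of $\Phi$) has class $(0,2)$, so $L\cdot A = 2b$ and $L\cdot B = 2a$; hence $\min(L\cdot A, L\cdot B) = 2\min(a,b)$ and $\sqrt{L^2} = \sqrt{2ab}$.

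Next I would note the elementary upper bound $\s(L,1) \le 2\min(a,b)$: a very general point $x$ lies on a fibre of $\Phi$ (always smooth, numerically $(0,2)$) and on a fibre of $\Psi$, and since $x$ avoids the finitely many singular fibres of $\Psi$ that latter fibre is smooth and numerically $(2,0)$; both pass through $x$ with multiplicity one, so $\s(L,1) = \s(L,x) \le \min(L\cdot A, L\cdot B) = 2\min(a,b)$. Combining this with the dichotomy of Theorem~\ref{general-seshadri-constant} (either $\s(L,1) \ge (0.93)\sqrt{L^2}$, or $\s(L,1) = \min(L\cdot A, L\cdot B) = 2\min(a,b)$) yields, in all cases,
\[
\min\!\big((0.93)\sqrt{L^2},\ 2\min(a,b)\big)\ \le\ \s(L,1)\ \le\ 2\min(a,b).
\]

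The only remaining ingredient is the elementary equivalence, valid since $a,b>0$ by ampleness: after reducing to $a\le b$ (legitimate because $\min(a,b)$, $\max(a,b)$, $L^2 = 2ab$ and $\min(L\cdot A, L\cdot B)$ are all symmetric in $a,b$), the inequality $\frac{2\min(a,b)}{(0.93)^2}\le \max(a,b)$ reads $2a\le (0.93)^2 b$, and squaring (both sides positive) shows this is equivalent to $2\min(a,b)\le (0.93)\sqrt{2ab}$; similarly with both inequalities reversed. Feeding this into the boxed double inequality finishes both parts: under the hypothesis of (1) the left-hand term equals $2\min(a,b)$, forcing $\s(L,1) = 2\min(a,b)$; under the hypothesis of (2) the left-hand term equals $(0.93)\sqrt{L^2}$, giving $\s(L,1)\ge(0.93)\sqrt{L^2}$.

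I do not expect a genuine obstacle here — the argument is bookkeeping once Theorem~\ref{general-seshadri-constant} is available. The only points requiring care are verifying that the inequality manipulation is a true equivalence rather than a one-sided implication, and confirming that the "without loss of generality $a\le b$" reduction is harmless; both are routine given the symmetry noted above.
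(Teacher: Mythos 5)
Your proposal is correct and follows essentially the same route as the paper: the upper bound $\s(L,1)\le \min(L\cdot A, L\cdot B)=2\min(a,b)$ from the two smooth fibres through a very general point, the dichotomy of Theorem~\ref{general-seshadri-constant}, and the elementary equivalence $\frac{2\min(a,b)}{(0.93)^2}\le\max(a,b)\Leftrightarrow 2\min(a,b)\le(0.93)\sqrt{2ab}$. Packaging the conclusion as the single double inequality $\min\big((0.93)\sqrt{L^2},\,2\min(a,b)\big)\le\s(L,1)\le 2\min(a,b)$ is a mild reorganization, not a different argument.
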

\begin{proof}
Note that when $X$ has type 2, a fibre $B$ of $\Phi$ is given by
$(0,2)$ and a smooth fibre $A$ of $\Psi$ is given by $(2,0)$. So
$L\cdot A = 2b$ and $L \cdot B = 2a$. Since a very general point $x
\in X$ always belongs to a fibre $B$ and a smooth fibre $A$, we have 
$\s(L,1) = \s(L,x) \le \min(2a,2b)$. Also, by Theorem
\ref{general-seshadri-constant}, $\s(L,1) \ge (0.93)\sqrt{L^2}$ or
$\s(L,1) = \min(2a,2b)$. Note that $L^2 = 2ab$. 

We have
\begin{eqnarray*}
\frac{2\min(a,b)}{(0.93)^2} &\le& \rm{max}(a,b) \\
\Leftrightarrow  4(\min(a,b))^2 &\le& (0.93)^2(2ab) \\
\Leftrightarrow 2\min(a,b) &\le&  (0.93)\sqrt{2ab} \\
\Rightarrow\s(L,1) &=&  \min(2a,2b).
\end{eqnarray*}

On the other hand, 

$\frac{2\min(a,b)}{(0.93)^2} \ge \rm{max}(a,b) \Leftrightarrow 
2\min(a,b) \ge (0.93)\sqrt{2ab} \Rightarrow$  
$\s(L,1) \ge (0.93)\sqrt{2ab}$.
\end{proof}

\begin{theorem}\label{type3}
Let $X$ be a hyperelliptic surface of type 3. Let $L \equiv (a,b)$ be an
ample line bundle on $X$. Then the following statements hold: 
\begin{enumerate}
\item  If $b \le \frac{a(0.93)^2}{8}$, then $\s(L,1) = 4b$. 
\item If $\frac{a(0.93)^2}{8} \le b \le \frac{a}{2(0.93)^2}$,
then $\s(L,1) \ge (0.93)\sqrt{L^2}$.
\item If $b \ge \frac{a}{2(0.93)^2}$, then $\s(L,1) = a$.
\end{enumerate}
\end{theorem}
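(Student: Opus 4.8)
The plan is to mimic the structure of the proof of Theorem~\ref{type2}, using Theorem~\ref{general-seshadri-constant} as the main input. First I would record the numerical data for type~3 surfaces: from Theorem~\ref{serrano} we have $\mu = \mathrm{lcm}(2,4,4) = 4$ and $\gamma = |G| = 4$, so $\mu = \gamma$ and the basis of $\mathrm{Num}(X)$ given by $A/\mu$, $(\mu/\gamma)B$ is just $A/4$, $B$. Hence a fibre $B$ of $\Phi$ is numerically $(0,1)$ and a smooth fibre $A$ of $\Psi$ is numerically $(4,0)$, giving $L\cdot A = 4b$ and $L\cdot B = a$. Since a very general point lies on a smooth fibre of each fibration, $\s(L,1) \le \min(a,4b)$, and by Theorem~\ref{general-seshadri-constant} we have either $\s(L,1) \ge (0.93)\sqrt{L^2}$ or $\s(L,1) = \min(a,4b)$, where $L^2 = 2ab$.

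Next I would convert the three hypotheses into inequalities comparing $\min(a,4b)$ with $(0.93)\sqrt{2ab}$, exactly as in Theorem~\ref{type2}. The point is that $\min(a,4b)$ equals $4b$ precisely when $4b \le a$, and $4b \le (0.93)\sqrt{2ab}$ is equivalent (squaring, both sides positive) to $16b^2 \le (0.93)^2 \cdot 2ab$, i.e. $b \le \frac{a(0.93)^2}{8}$; note this already forces $4b \le a$ since $(0.93)^2/8 < 1/4$, so in case (1) we indeed have $\min(a,4b) = 4b \le (0.93)\sqrt{L^2}$, and combined with the dichotomy from Theorem~\ref{general-seshadri-constant} we conclude $\s(L,1) = 4b$. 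Symmetrically, $\min(a,4b) = a$ when $a \le 4b$, and $a \le (0.93)\sqrt{2ab}$ is equivalent to $a^2 \le (0.93)^2 \cdot 2ab$, i.e. $a \le 2(0.93)^2 b$, that is $b \ge \frac{a}{2(0.93)^2}$; since $2(0.93)^2 > 1$ this forces $a \le 4b$, so in case (3) we get $\min(a,4b) = a \le (0.93)\sqrt{L^2}$ and hence $\s(L,1) = a$. In the intermediate case (2), $\frac{a(0.93)^2}{8} \le b \le \frac{a}{2(0.93)^2}$ means both $4b \ge (0.93)\sqrt{2ab}$ and $a \ge (0.93)\sqrt{2ab}$, so $\min(a,4b) \ge (0.93)\sqrt{L^2}$; since the other alternative of the dichotomy already gives $\s(L,1) \ge (0.93)\sqrt{L^2}$, we conclude $\s(L,1) \ge (0.93)\sqrt{L^2}$ in all subcases.

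I would present the algebra in an \texttt{eqnarray*} or \texttt{align*} display for each equivalence, taking care that the squaring steps are genuine equivalences (all quantities are positive since $L$ is ample, so $a,b>0$). The only real subtlety — and the step I would flag as needing a line of justification rather than being purely mechanical — is confirming that the boundary constants are consistent: one must check $\frac{(0.93)^2}{8} < \frac14 < \frac{1}{2(0.93)^2}$ so that the three cases (1), (2), (3) actually partition the range of $b/a$ (up to the shared endpoints), with case (2) being a genuine nonempty interval. This is immediate from $(0.93)^2 = 0.8649$, giving $\frac{(0.93)^2}{8} \approx 0.1081 < 0.25$ and $\frac{1}{2(0.93)^2} \approx 0.578 > 0.25$. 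No other obstacle arises; the proof is essentially a transcription of the type~2 argument with the fibre classes $(0,1)$ and $(4,0)$ in place of $(0,2)$ and $(2,0)$.
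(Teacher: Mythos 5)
Your proposal is correct and follows essentially the same route as the paper's own proof: identify the fibre classes $(0,1)$ and $(4,0)$, deduce $\s(L,1)\le\min(a,4b)$ from the fibres through a very general point, invoke the dichotomy of Theorem~\ref{general-seshadri-constant}, and translate each hypothesis on $b/a$ into a comparison of $\min(a,4b)$ with $(0.93)\sqrt{2ab}$. Your additional check that $\frac{(0.93)^2}{8}<\frac14<\frac{1}{2(0.93)^2}$, so that the three cases genuinely cover all of $b/a>0$ with a nonempty middle interval, is a small but worthwhile refinement the paper leaves implicit.
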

\begin{proof}
Note that when $X$ has type 3, a fibre $B$ of $\Phi$ is given by
$(0,1)$ and a smooth fibre $A$ of $\Psi$ is given by $(4,0)$. So
$L\cdot A = 4b$ and $L \cdot B = a$. Since a very general point $x
\in X$ always belongs to a fibre $B$ and a smooth fibre $A$, we have 
$\s(L,1) = \s(L,x) \le \min(a,4b)$. Also, by Theorem
\ref{general-seshadri-constant}, $\s(L,1) \ge (0.93)\sqrt{L^2}$ or
$\s(L,1) = \min(a,4b)$.

If $b \le \frac{a(0.93)^2}{8}$, then clearly $4b \le a$. Further 
$b \le \frac{a(0.93)^2}{8} \Leftrightarrow  4b \le
(0.93)\sqrt{2ab}$. So $\s(L,1) = 4b$. 

If $b \ge \frac{a}{2(0.93)^2}$, then clearly $a \le 4b$. Further 
$b \ge \frac{a}{2(0.93)^2}\Leftrightarrow a \le (0.93)\sqrt{2ab}$. So
$\s(L,1) = a$. 

Finally, if $\frac{a(0.93)^2}{8} \le b \le \frac{a}{2(0.93)^2}$, then
$a \ge (0.93)\sqrt{2ab}$ and $4b \ge (0.93)\sqrt{2ab}$. So $\s(L,1)
\ge (0.93)\sqrt{2ab}$. 
\end{proof}

\begin{theorem}\label{type4}
Let $X$ be a hyperelliptic surface of type 4. Let $L \equiv (a,b)$ be an
ample line bundle on $X$. Then the following statements hold: 
\begin{enumerate}
\item  If $b \le \frac{a(0.93)^2}{8}$, then $\s(L,1) = 4b$. 
\item If $\frac{a(0.93)^2}{8} \le b \le \frac{2a}{(0.93)^2}$,
then $\s(L,1) \ge (0.93)\sqrt{L^2}$.
\item If $b \ge \frac{2a}{(0.93)^2}$, then $\s(L,1) = 2a$.
\end{enumerate}
\end{theorem}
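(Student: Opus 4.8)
The plan is to follow the template of the proofs of Theorems \ref{type2} and \ref{type3}, combining the explicit fibre classes in type 4 with the dichotomy of Theorem \ref{general-seshadri-constant}. First I would record the numerical data: for type 4 one has $\mu = \mathrm{lcm}(2,4,4) = 4$ and $\gamma = |G| = 8$, so by Theorem \ref{serrano} a basis of $\mathrm{Num}(X)$ is $A/4$, $B/2$, and $L \equiv (a,b)$ means $L$ is numerically $a\cdot A/4 + b\cdot B/2$. Using $A^2 = B^2 = 0$ and $A\cdot B = 8$, I compute $L\cdot A = 4b$, $L\cdot B = 2a$ and $L^2 = 2ab$. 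A fibre of $\Phi$ is a smooth curve numerically equivalent to $(0,2)$, and a smooth fibre of $\Psi$ is numerically equivalent to $(4,0)$; a very general point $x$ lies on one of each and avoids the finitely many singular fibres of $\Psi$, so $\s(L,1) \le \min(L\cdot A, L\cdot B) = \min(4b, 2a)$. Theorem \ref{general-seshadri-constant} then gives the dichotomy: either $\s(L,1) \ge (0.93)\sqrt{L^2}$, or $\s(L,1) = \min(4b, 2a)$.

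The next step is purely arithmetic: translate the three hypotheses on $b$ into comparisons between the candidate ratios $4b$, $2a$ and $(0.93)\sqrt{L^2} = (0.93)\sqrt{2ab}$. Squaring and cancelling a common factor, I get $4b \le (0.93)\sqrt{2ab} \iff b \le \frac{a(0.93)^2}{8}$ and $2a \le (0.93)\sqrt{2ab} \iff b \ge \frac{2a}{(0.93)^2}$. I also note that $b \le \frac{a(0.93)^2}{8}$ forces $4b \le 2a$ (since $(0.93)^2/8 < 1/2$), so $\min(4b,2a) = 4b$ in regime (1), and dually $b \ge \frac{2a}{(0.93)^2}$ forces $4b \ge 2a$, so $\min(4b,2a) = 2a$ in regime (3).

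Finally I would feed these back into the dichotomy. In case (1), $\s(L,1) \le 4b \le (0.93)\sqrt{L^2}$, so the first branch of the dichotomy can only hold with equality, and in either branch $\s(L,1) = 4b$. Case (3) is symmetric and yields $\s(L,1) = 2a$. In case (2) both $4b$ and $2a$ are at least $(0.93)\sqrt{2ab}$, so $\min(4b,2a) \ge (0.93)\sqrt{L^2}$, and both branches of the dichotomy give $\s(L,1) \ge (0.93)\sqrt{L^2}$. I do not expect a genuine obstacle here; the only delicate points are the bookkeeping with $\mu$ and $\gamma$ so that the fibre classes $(0,2)$ and $(4,0)$ and the value $L^2 = 2ab$ are correct, and verifying that the numerical thresholds on $b$ are \emph{exactly} the values at which the two candidate ratios cross $(0.93)\sqrt{L^2}$, so that they match the hypotheses in the three parts verbatim.
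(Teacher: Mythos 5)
Your proposal is correct and follows essentially the same route as the paper: compute $L\cdot A = 4b$, $L\cdot B = 2a$ from the type-4 fibre classes $(4,0)$ and $(0,2)$, apply the dichotomy of Theorem \ref{general-seshadri-constant}, and resolve the three regimes by the arithmetic equivalences $4b \le (0.93)\sqrt{2ab} \Leftrightarrow b \le a(0.93)^2/8$ and $2a \le (0.93)\sqrt{2ab} \Leftrightarrow b \ge 2a/(0.93)^2$. The bookkeeping with $\mu=4$, $\gamma=8$ and the threshold checks all match the paper's argument.
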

\begin{proof}
Note that when $X$ has type 4, a fibre $B$ of $\Phi$ is given by
$(0,2)$ and a smooth fibre $A$ of $\Psi$ is given by $(4,0)$. So
$L\cdot A = 4b$ and $L \cdot B = 2a$. Since a very general point $x
\in X$ always belongs to a fibre $B$ and a smooth fibre $A$, we have 
$\s(L,1) = \s(L,x) \le \min(2a,4b)$. Also, by Theorem
\ref{general-seshadri-constant}, $\s(L,1) \ge (0.93)\sqrt{L^2}$ or
$\s(L,1) = \min(2a,4b)$. 

If $b \le \frac{a(0.93)^2}{8}$, then clearly $4b \le 2a$. Further 
$b \le \frac{a(0.93)^2}{8} \Leftrightarrow  4b \le
(0.93)\sqrt{2ab}$. So $\s(L,1) = 4b$. 

If $b \ge \frac{2a}{(0.93)^2}$, then clearly $2a \le 4b$. Further 
$b \ge \frac{2a}{(0.93)^2}\Leftrightarrow 2a \le (0.93)\sqrt{2ab}$. So
$\s(L,1) = 2a$. 

Finally, if $\frac{a(0.93)^2}{8} \le b \le \frac{2a}{(0.93)^2}$, then
$2a \ge (0.93)\sqrt{2ab}$ and $4b \ge (0.93)\sqrt{2ab}$. So $\s(L,1)
\ge (0.93)\sqrt{2ab}$. 
\end{proof}

\begin{theorem}\label{type5}
Let $X$ be a hyperelliptic surface of type 5. Let $L \equiv  (a,b)$ be an
ample line bundle on $X$. Then the following statements hold: 
\begin{enumerate}
\item  If $b \le \frac{2a(0.93)^2}{9}$, then $\s(L,1) = 3b$. 
\item If $\frac{2a(0.93)^2}{9} \le b \le \frac{a}{2(0.93)^2}$,
then $\s(L,1) \ge (0.93)\sqrt{L^2}$.
\item If $b \ge \frac{a}{2(0.93)^2}$, then $\s(L,1) = a$.
\end{enumerate}
\end{theorem}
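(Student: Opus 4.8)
The plan is to follow closely the proofs of Theorems \ref{type3} and \ref{type4}, since a type 5 surface fits the same pattern: the Seshadri constant at a very general point is governed entirely by the smooth fibres of $\Phi$ and $\Psi$. First I would record the numerical data from Serrano's table (Theorem \ref{serrano}): for type 5, $G = \ints_3$, so $\mu = \gamma = 3$, a fibre $B$ of $\Phi$ is numerically $(0,1)$, and a smooth fibre $A$ of $\Psi$ is numerically $(3,0)$. Using $A^2 = B^2 = 0$, $A \cdot B = 3$, and $L \equiv (a,b)$, this gives $L \cdot A = 3b$, $L \cdot B = a$, and $L^2 = 2ab$.

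Next, a very general point $x \in X$ lies on a (smooth) fibre $B$ of $\Phi$ and on a smooth fibre $A$ of $\Psi$ (it avoids the three singular fibres of $\Psi$), so $\s(L,1) = \s(L,x) \le \min(L \cdot A, L \cdot B) = \min(3b, a)$. On the other hand, Theorem \ref{general-seshadri-constant} yields the dichotomy: either $\s(L,1) \ge (0.93)\sqrt{L^2}$, or $\s(L,1) = \min(L \cdot A, L \cdot B) = \min(3b, a)$.

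Then I would settle the three regimes using the elementary equivalences, obtained by clearing denominators and squaring (all quantities being positive),
\[ b \le \frac{2a(0.93)^2}{9} \Leftrightarrow 3b \le (0.93)\sqrt{2ab}, \qquad b \ge \frac{a}{2(0.93)^2} \Leftrightarrow a \le (0.93)\sqrt{2ab}, \]
together with the observations that $\tfrac{2(0.93)^2}{9} < \tfrac13$ (so in case (1) we have $3b < a$ and $\min(3b,a) = 3b$) and $\tfrac{1}{2(0.93)^2} > \tfrac13$ (so in case (3) we have $3b > a$ and $\min(3b,a) = a$). In case (1), both branches of the dichotomy force $\s(L,1) = 3b$: if $\s(L,1) \ge (0.93)\sqrt{2ab} \ge 3b$, then combined with $\s(L,1) \le \min(3b,a) = 3b$ we get equality; the other branch is $\s(L,1) = 3b$ outright. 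Case (3) is the same with $a$ replacing $3b$. In case (2), the two reversed equivalences give $\min(3b,a) \ge (0.93)\sqrt{2ab}$, so either branch of the dichotomy delivers $\s(L,1) \ge (0.93)\sqrt{L^2}$.

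I expect no serious obstacle: the argument is a direct transcription of the type 3 and type 4 proofs with $4b$ replaced by $3b$ and $2a$ by $a$, and $L^2 = 2ab$ unchanged. The one point worth verifying carefully is the numerical identification $A \equiv (3,0)$, $B \equiv (0,1)$ for type 5 — this is exactly what makes $\min(L\cdot A, L\cdot B) = \min(3b,a)$ — and it follows immediately from $\mu = \gamma = 3$ in Serrano's table. It is also worth checking that $\tfrac{2a(0.93)^2}{9} \le \tfrac{a}{2(0.93)^2}$ (equivalently $(0.93)^4 \le \tfrac94$), so that the three cases genuinely exhaust all ample $L$.
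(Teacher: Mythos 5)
Your proposal is correct and follows essentially the same route as the paper's own proof: read off $A \equiv (3,0)$, $B \equiv (0,1)$ from Serrano's table, get the upper bound $\s(L,1) \le \min(a,3b)$ from the fibres through a very general point, invoke Theorem \ref{general-seshadri-constant} for the dichotomy, and resolve the three regimes via the equivalences $3b \le (0.93)\sqrt{2ab}$ and $a \le (0.93)\sqrt{2ab}$. Your extra check that the intervals overlap (i.e., $(0.93)^4 \le 9/4$) is a small point the paper leaves implicit.
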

\begin{proof}
Note that when $X$ has type 5, a fibre $B$ of $\Phi$ is given by
$(0,1)$ and a smooth fibre $A$ of $\Psi$ is given by $(3,0)$. So
$L\cdot A = 3b$ and $L \cdot B = a$. Since a very general point $x
\in X$ always belongs to a fibre $B$ and a smooth fibre $A$, we have 
$\s(L,1) = \s(L,x) \le \min(a,3b)$. Also, by Theorem
\ref{general-seshadri-constant}, $\s(L,1) \ge (0.93)\sqrt{L^2}$ or
$\s(L,1) = \min(a,3b)$.

If $b \le \frac{2a(0.93)^2}{9}$, then clearly $3b \le a$. Further 
$b \le \frac{2a(0.93)^2}{9} \Leftrightarrow  3b \le
(0.93)\sqrt{2ab}$. So $\s(L,1) = 3b$. 

If $b \ge \frac{a}{2(0.93)^2}$, then clearly $a \le 3b$. Further 
$b \ge \frac{a}{2(0.93)^2}\Leftrightarrow a \le (0.93)\sqrt{2ab}$. So
$\s(L,1) = a$. 

Finally, if $\frac{2a(0.93)^2}{9} \le b \le \frac{a}{2(0.93)^2}$, then
$a \ge (0.93)\sqrt{2ab}$ and $3b \ge (0.93)\sqrt{2ab}$. So $\s(L,1)
\ge (0.93)\sqrt{2ab}$. 
\end{proof}

\begin{theorem}\label{type6}
Let $X$ be a hyperelliptic surface of type 6. Let $L \equiv  (a,b)$ be an
ample line bundle on $X$. Then the following statements hold: 
\begin{enumerate}
\item  If $\frac{9\min(a,b)}{2(0.93)^2} \le
\rm{max}(a,b)$, then $\s(L,1) = 3\min(a,b)$. 
\item If $\frac{9\min(a,b)}{2(0.93)^2} \ge \max(a,b)$, then $\s(L,1) \ge
(0.93)\sqrt{L^2}$.
\end{enumerate}
\end{theorem}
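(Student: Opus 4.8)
The plan is to follow the pattern already used for the other even types in Theorems \ref{type2} and \ref{type4}: reduce everything to the dichotomy supplied by Theorem \ref{general-seshadri-constant}, and then match the numerical hypotheses against it.

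First I would record the numerical data for type 6. Here $|G| = 9$ and the singular fibres of $\Psi$ all have multiplicity $3$, so $\mu = 3$ and $\gamma = 9$; by Theorem \ref{serrano} a basis of $\mathrm{Num}(X)$ is $A/3$, $B/3$, with $(A/3)^2 = (B/3)^2 = 0$ and $(A/3)\cdot(B/3) = (A\cdot B)/9 = 1$. Hence for $L \equiv (a,b)$ we get $L^2 = 2ab$. A fibre $B$ of $\Phi$ is numerically $(0,3)$ and a smooth fibre $A$ of $\Psi$ is numerically $(3,0)$, so $L\cdot A = 3b$ and $L\cdot B = 3a$, giving $\min(L\cdot A, L\cdot B) = 3\min(a,b)$. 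Since a very general point $x$ lies on a smooth fibre of $\Phi$ and on a smooth fibre of $\Psi$ (the finitely many singular fibres of $\Psi$ being avoided), each containing $x$ with multiplicity $1$, we obtain $\s(L,1) = \s(L,x) \le 3\min(a,b)$. Theorem \ref{general-seshadri-constant} then gives the dichotomy: either $\s(L,1) \ge (0.93)\sqrt{L^2}$, or $\s(L,1) = 3\min(a,b)$.

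Next I would translate the hypotheses. Writing $L^2 = 2ab$, one checks — by multiplying $\tfrac{9\min(a,b)}{2(0.93)^2} \le \max(a,b)$ through by $2(0.93)^2\min(a,b) > 0$, obtaining $9(\min(a,b))^2 \le (0.93)^2 L^2$, and taking square roots of these positive quantities — that the hypothesis of (1) is equivalent to $3\min(a,b) \le (0.93)\sqrt{L^2}$, and likewise the reversed inequality in (2) is equivalent to $3\min(a,b) \ge (0.93)\sqrt{L^2}$. For (1): in the first branch of the dichotomy, combining $\s(L,1) \ge (0.93)\sqrt{L^2}$ with $\s(L,1) \le 3\min(a,b) \le (0.93)\sqrt{L^2}$ forces equalities throughout, so $\s(L,1) = 3\min(a,b)$; the second branch gives this directly. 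For (2): the first branch is already the desired conclusion, and in the second branch $\s(L,1) = 3\min(a,b) \ge (0.93)\sqrt{L^2}$.

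I do not expect a genuine obstacle: all of the substantive content — the self-intersection bound for curves through a very general point (Theorem \ref{xu}) and the extraction of the dichotomy (Theorems \ref{general-non-fibre} and \ref{general-seshadri-constant}) — is already in place, and type 6 is even, so no new case analysis of fibre multiplicities at the relevant point is needed. The only thing to be careful about is the numerical bookkeeping specific to type 6, namely that $A\cdot B = 9$, that the $\Phi$-fibre has class $(0,3)$ rather than $(0,1)$, and that the singular fibres of $\Psi$ play no role at a very general point.
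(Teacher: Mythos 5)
Your proposal is correct and follows essentially the same route as the paper's proof: bound $\s(L,1)$ above by $\min(L\cdot A, L\cdot B)=3\min(a,b)$ using the two fibres through a very general point, invoke the dichotomy of Theorem \ref{general-seshadri-constant}, and translate the numerical hypotheses into $3\min(a,b) \lessgtr (0.93)\sqrt{2ab}$ exactly as the paper does. The bookkeeping ($A\cdot B = 9$, fibre classes $(0,3)$ and $(3,0)$, $L^2 = 2ab$) is also the same.
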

\begin{proof}
Note that when $X$ has type 6, a fibre $B$ of $\Phi$ is given by
$(0,3)$ and a smooth fibre $A$ of $\Psi$ is given by $(3,0)$. So
$L\cdot A = 3b$ and $L \cdot B = 3a$. Since a very general point $x
\in X$ always belongs to a fibre $B$ and a smooth fibre $A$, we have 
$\s(L,1) = \s(L,x) \le \min(3a,3b)$. Also, by Theorem
\ref{general-seshadri-constant}, $\s(L,1) \ge (0.93)\sqrt{L^2}$ or
$\s(L,1) = \min(3a,3b)$. 

We have
%$\frac{9\min(a,b)}{2(0.93)^2} \le \rm{max}(a,b) 
%\Leftrightarrow  9(\min(a,b))^2 \le (0.93)^2(2ab) 
%\Leftrightarrow 3\min(a,b) \le  (0.93)\sqrt{2ab} $
%$\Rightarrow \s(L,1) = \min(3a,3b).$
\begin{eqnarray*}
\frac{9\min(a,b)}{2(0.93)^2} &\le& \rm{max}(a,b) \\
\Leftrightarrow  9(\min(a,b))^2 &\le& (0.93)^2(2ab) \\
\Leftrightarrow 3\min(a,b) &\le&  (0.93)\sqrt{2ab} \\
\Rightarrow\s(L,1) &=&  \min(3a,3b).
\end{eqnarray*}

On the other hand, 

$\frac{9\min(a,b)}{2(0.93)^2} \ge \max(a,b) \Leftrightarrow 
3\min(a,b) \ge (0.93)\sqrt{2ab} \Rightarrow \s(L,1) \ge (0.93)\sqrt{2ab}$.
\end{proof}

\begin{theorem}\label{type7}
Let $X$ be a hyperelliptic surface of type 7. Let $L \equiv (a,b)$ be an
ample line bundle on $X$. Then the following statements hold: 
\begin{enumerate}
\item  If $b \le \frac{a(0.93)^2}{18}$, then $\s(L,1) = 6b$. 
\item If $\frac{a(0.93)^2}{18} \le b \le \frac{a}{2(0.93)^2}$,
then $\s(L,1) \ge (0.93)\sqrt{L^2}$.
\item If $b \ge \frac{a}{2(0.93)^2}$, then $\s(L,1) = a$.
\end{enumerate}
\end{theorem}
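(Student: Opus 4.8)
The plan is to follow verbatim the template of the proofs of Theorems~\ref{type3} and~\ref{type5}, since type 7 behaves like an odd type. First I would record the numerical data: for a type 7 surface, $\mu = \mathrm{lcm}(2,3,6) = 6$ and $\gamma = |G| = 6$, so $\mu/\gamma = 1$, and by Serrano's table the basis of $\mathrm{Num}(X)$ is $A/6,\, B$. Hence a fibre $B$ of $\Phi$ is numerically $(0,1)$ and a smooth fibre $A$ of $\Psi$ is numerically $(6,0)$. Using $A^2 = B^2 = 0$ and $A\cdot B = \gamma = 6$, for $L\equiv(a,b)$ one gets $L\cdot B = a$, $L\cdot A = 6b$, and $L^2 = 2ab$.

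Second, a very general point $x\in X$ lies on some fibre $B$ of $\Phi$ and on a smooth fibre $A$ of $\Psi$, so $\s(L,1) = \s(L,x) \le \min(a,6b)$. Combining this with the dichotomy of Theorem~\ref{general-seshadri-constant}, we get that either $\s(L,1)\ge(0.93)\sqrt{L^2}$, or $\s(L,1) = \min(L\cdot A, L\cdot B) = \min(a,6b)$.

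Third, I would split into the three ranges of $b$. If $b\le a(0.93)^2/18$, then $6b\le a(0.93)^2 < a$, so $\min(a,6b) = 6b$; multiplying $18b\le a(0.93)^2$ by $2b$ and taking square roots gives $6b\le(0.93)\sqrt{2ab}$, which rules out the first alternative, so $\s(L,1) = 6b$. Symmetrically, if $b\ge a/(2(0.93)^2)$, then $a\le 2b(0.93)^2 < 6b$, so $\min(a,6b) = a$; multiplying $a\le 2b(0.93)^2$ by $a$ and taking square roots gives $a\le(0.93)\sqrt{2ab}$, hence $\s(L,1) = a$. Finally, if $a(0.93)^2/18 \le b \le a/(2(0.93)^2)$, the same two computations yield $6b\ge(0.93)\sqrt{2ab}$ and $a\ge(0.93)\sqrt{2ab}$, so $\min(a,6b)\ge(0.93)\sqrt{L^2}$, and in either case of the dichotomy $\s(L,1)\ge(0.93)\sqrt{L^2}$.

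I do not expect any real obstacle: the argument is a mechanical instance of the earlier proofs. The only points needing a little care are (i) checking that $\mu=\gamma$ for type 7, which is what makes the fibre $B$ equal to $(0,1)$ and places type 7 alongside the odd types rather than the even ones; and (ii) verifying that the threshold constants $a(0.93)^2/18$ and $a/(2(0.93)^2)$ are precisely the breakpoints where $6b$, respectively $a$, crosses $(0.93)\sqrt{2ab}$ — i.e.\ the bookkeeping $36b^2\le(0.93)^2\,2ab$ and $a^2\le(0.93)^2\,2ab$ — and, incidentally, that the middle range is nonempty since $2(0.93)^4 < 18$.
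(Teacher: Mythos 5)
Your proposal is correct and follows essentially the same route as the paper's own proof: identify $B\equiv(0,1)$ and $A\equiv(6,0)$ so that $\s(L,1)\le\min(a,6b)$, invoke the dichotomy of Theorem \ref{general-seshadri-constant}, and check in each range of $b$ which of $a$, $6b$ lies below or above $(0.93)\sqrt{2ab}$. The numerical bookkeeping ($36b^2\le(0.93)^2\,2ab$ and $a^2\le(0.93)^2\,2ab$) matches the paper's computations exactly.
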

\begin{proof}
Note that when $X$ has type 7, a fibre $B$ of $\Phi$ is given by
$(0,1)$ and a smooth fibre $A$ of $\Psi$ is given by $(6,0)$. So
$L\cdot A = 6b$ and $L \cdot B = a$. Since a very general point $x
\in X$ always belongs to a fibre $B$ and a smooth fibre $A$, we have 
$\s(L,1) = \s(L,x) \le \min(a,6b)$. Also, by Theorem
\ref{general-seshadri-constant}, $\s(L,1) \ge (0.93)\sqrt{L^2}$ or
$\s(L,1) = \min(a,6b)$.

If $b \le \frac{a(0.93)^2}{18}$, then clearly $6b \le a$. Further 
$b \le \frac{a(0.93)^2}{18} \Leftrightarrow  6b \le
(0.93)\sqrt{2ab}$. So $\s(L,1) = 6b$. 

If $b \ge \frac{a}{2(0.93)^2}$, then clearly $a \le 6b$. Further 
$b \ge \frac{a}{2(0.93)^2}\Leftrightarrow a \le (0.93)\sqrt{2ab}$. So
$\s(L,1) = a$. 

Finally, if $\frac{a(0.93)^2}{18} \le b \le \frac{a}{2(0.93)^2}$, then
$a \ge (0.93)\sqrt{2ab}$ and $6b \ge (0.93)\sqrt{2ab}$. So $\s(L,1)
\ge (0.93)\sqrt{2ab}$. 
\end{proof}

\begin{remark} \label{compare}
We compare the result in Theorem \ref{general-seshadri-constant} with some
  bounds in the literature.  There has been a lot of interest in
  finding good lower bound for $\s(L,1)$. See, for instance, 
\cite{N,HK,ST,SSyz,FSST}. 

Let $X$ be any surface and let $L$ be an ample line bundle on $X$. 
It is known that $\s(X,L,1) \ge \sqrt{\frac{7}{9}}\sqrt{L^2}$, or $X$
is fibred by Seshadri curves, or $X$ is a cubic surface in $\proj^3$
and $L = \str_X(1)$; see \cite[Corollary 3.3]{SSyz}.
Since $\sqrt{\frac{7}{9}}$ is approximately $0.88$, the bound we give
in Theorem \ref{general-seshadri-constant} is better. 

Another recent result in this direction is
contained in \cite{FSST}. Let $d:=L^2$ and suppose that $d$ is not a
square. Then the equation $y^2-dx^2=1$ is known as {\it Pell's
  equation}. If $x=p,y=q$ is a solution to this
equation, then \cite[Theorem 1.3]{FSST} shows that $\s(L,1) \ge
\frac{p}{q}d$ or $\s(X,L,1)$ is contained in a finite set
${\rm Exc}(d;p,q)$ of rational
numbers which are easy to list. Though this bound is often better than
$(0.93)\sqrt{L^2}$, the set ${\rm Exc}(d;p,q)$ is typically large.

As an example, let $X$ be a hyperelliptic surface of type 6 and let $L
\equiv (5,11)$. Then $d = L^2 = 110$ and $\sqrt{L^2} \sim 10.49$.
By Theorem \ref{type6}, $\s(X,L,1) \ge (0.93)\sqrt{110} \sim 9.75$. On
the other hand, $(2,21)$ is a solution to Pell's equation
$y^2-110x^2=1$. So by \cite[Theorem 1.3]{FSST}, $\s(X,L,1) \ge
\frac{220}{21} \sim 10.48$, or $\s(X,L,1) \in {\rm Exc}(110;2,21)$.
Though $10.48$ is a much better approximation to $\sqrt{L^2}$ compared to
our $9.75$, the exceptional set $ {\rm Exc}(110;2,21)$ is large and it
is not easy in general to lower the number of possibilities. 
In this case, ${\rm Exc}(110;2,21) =\{ 1,2,\ldots,10\} \cup \{
  \frac{r}{s} ~|~ 1 \le \frac{r}{s} < \frac{220}{21} {\rm ~and~} 2
                                \le s <  21^2=441\}$.

We also note that our results give precise values of $\s(X,L,1)$ in
many cases. For example, if $X$ is hyperelliptic of type 6 and 
$L\equiv (5,b)$ and $b \ge 27$, then $\s(X,L,1) =
15$, by 
Theorem \ref{type6}.

\end{remark}

{\bf Acknowledgement:} We thank the  
referee for carefully reading the paper and 
suggesting some changes which improved the exposition.

\bibliographystyle{plain}

\end{document}